\documentclass[12pt]{paper}

\usepackage{amssymb,amsfonts,amsthm,amsmath,mathrsfs}

\makeatletter
\newcommand{\subjclass}[2][1991]{%
  \let\@oldtitle\@title%
  \gdef\@title{\@oldtitle\footnotetext{#1 \emph{Mathematics subject classification.} #2}}%
}
\makeatother

\usepackage{graphicx, verbatim}
\usepackage{xcolor}

\usepackage[a4paper, hmargin=3cm, vmargin={4cm, 4cm}]{geometry}
\usepackage{fancyhdr}

\newtheorem{theorem}{Theorem}
\newtheorem{lemma}[theorem]{Lemma}
\newtheorem{question}[theorem]{Question}

\newtheorem{claim}[theorem]{Claim}
\newtheorem{remark}[theorem]{Remark}

\newcommand{\bR}{\mathbb R}
\newcommand{\bC}{\mathbb C}
\newcommand{\bZ}{\mathbb Z}
\newcommand{\bD}{\mathbb D}

\newcommand{\bH}{\mathbb H}
\newcommand{\bG}{\mathbb G}

\newcommand{\la}{\lambda}
\renewcommand{\phi}{\varphi}

\newcommand{\im}{{\rm Im}\,}
\newcommand{\re}{{\rm Re}\,}

\begin{document}

\title{
When a meromorphic function that omits three values is of bounded type
}

\author{Alexandre Eremenko, Aleksei Kulikov, Mikhail Sodin}

\maketitle

\abstract{Suppose that a function $F$ is meromorphic in the domain $\bH(-m)=\{z\colon \im z>-m(\re z) \}$, 
where $m$ is an even, positive, and continuous function that does not increase on
$\bR_{\ge 0}$,  and suppose that $F$ omits there three distinct values. 
Then $F$ is of bounded type in the upper half-plane
(i.e., is represented there as a quotient of two bounded analytic functions), provided that the logarithmic integral 
of the function $m$ is convergent. On the other hand, if the logarithmic integral of $m$ diverges, there exists a
function $F$ meromorphic in $\bH(-m)$, that omits there three distinct values,
and which is of unbounded type in the upper half-plane.

This result is motivated by a century-old question originating with Rolf Nevanlinna.
}

\section{Introduction}

We use the following notation:

\begin{itemize}
\item  $m\colon \bR \to \bR_{> 0}$ is an even continuous  function that does not increase on
$\bR_{\ge 0}$.

\item $\bH$ denotes the upper half-plane;

\item $\bH(\pm m) = \{z=x+iy\colon y > \pm m(x), x\in \bR\}$.

\item For $x\in\bR$, $x^+ = \max\{x, 0\}$ and $x^- = (-x)^+$.

\item $A \lesssim B$ means that $A\le CB$ with a positive constant $C$, $A \gtrsim B$
means that $B \lesssim A$, and $A \simeq B$ means that $A \lesssim B$
and $A \gtrsim  B$ hold simultaneously.
\end{itemize}

A meromorphic function $F$ is said to be of {\em bounded type} (belongs to the {\em Nevanlinna class}) 
in a domain $G\subset \bC$  if it can be represented as $F=F_1/F_2$, where $F_1, F_2$ are bounded analytic 
functions in $G$. Otherwise, $F$ is said to be of {\em unbounded type} in $G$. In this note we prove the 
following theorem.

\begin{theorem}\label{main_thm} \mbox{}

\medskip\noindent {\rm (a)} Assume that $\displaystyle \int_1^\infty \frac{\log^- m(t)}{t^2}\, {\rm d}t<\infty$.
Then any meromorphic function $F$ in $\bH(-m)$ that omits three distinct values  in $\bH(-m)$
is of bounded type in $\bH$.

\smallskip\noindent {\rm (b)} Assume that $\displaystyle \int_1^\infty \frac{\log^- m(t)}{t^2}\, {\rm d}t =\infty$. 
Then there exists a function $F$ meromorphic in $\bH(-m)$ that omits three distinct values in $\bH(-m)$ and is of 
unbounded type in $\bH$.

\end{theorem}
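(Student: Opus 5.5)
We prove (a) through a quantitative Schottky-type bound and a harmonic majorant, and (b) through an explicit construction built from a universal covering map.

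For part (a), normalize the omitted values to $0,1,\infty$, so that $F$ is holomorphic in $\bH(-m)$ and omits $0$ and $1$. We need $\log^+|F|$ to have a harmonic majorant in $\bH$. The standard local tool is Schottky's theorem in its hyperbolic-metric form. If $F$ omits $0,1$ in a disc $D(z,r)$, then
\[
\log^+|F(w)|\lesssim \bigl(1+\log^+|F(z)|\bigr)\,\frac{r}{r-|w-z|}
\]
for $w\in D(z,r)$. Equivalently, $\log\log$ of $|F|$ is Lipschitz with respect to the hyperbolic metric of the domain, with a universal constant. Since $\bH(-m)$ contains the discs $D(x,m(x))$ around real points, this gives
\[
\log^+|F(x+iy)|\lesssim C_0\, e^{C/\phi(x)},
\]
where $\phi(x)\gtrsim m(x)$ is roughly the distance to the boundary. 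This is far too weak, since it is doubly exponential in $1/m$.

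The better route uses subharmonicity. The function $u=\log^+|F|$ is subharmonic in $\bH(-m)$. By Schottky, the function $\log(1+u)$ grows at most like the hyperbolic distance, which is about $\log(1/\operatorname{dist})$. That gives only $u\lesssim \operatorname{dist}^{-C}$ near the boundary. A polynomial blow-up of size $m(x)^{-C}$ on a set at height $\sim m(x)$ is not integrable against the Poisson kernel without further work, so I would add one more step, as follows.

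Take the line $L=\{y=-m_1(x)\}$ with $m_1=m/2$, or more precisely the boundary of a slightly smaller domain. On $L$ we have
\[
u\le C\,m(x)^{-C}\quad\text{times a factor of at most exponential growth in }|x|.
\]
The growth factor comes from Schottky along a chain of discs toward a base point. Here I will need an a priori growth bound, and I must be careful: the number of chained discs of size $m$ needed to reach $x$ is about $\int_0^x dt/m(t)$, which could be huge. So the direct approach fails.

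The key idea I would try first is instead to work with the subharmonic function $v=\log\log^+|F|$ or with $\log(1+u)$. The hyperbolic-Lipschitz property of $\log(1+u)$ gives
\[
\log(1+u(z))\le \log(1+u(z_0))+C\rho(z,z_0),
\]
where $\rho$ is the hyperbolic distance in $\bH(-m)$. Then the problem reduces to showing the following. For the domain $\Omega=\bH(-m)$, the harmonic measure of $\bH$ satisfies $\int_{\bR}\log(1/m(x))\,dx/(1+x^2)<\infty$, and this is exactly what is needed to compare $\bH(-m)$ with $\bH$ at the level of harmonic majorants.

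Concretely, I would use the Beurling–Malliavin/Levinson-type lemma. A subharmonic function $u$ in $\bH(-m)$ satisfying $\log u(x+iy)\lesssim \log(1/(y+m(x)))+O(|z|)$ has, under $\int\log^- m/t^2<\infty$, a harmonic majorant in $\bH$. This is a Levinson log-log theorem variant: it gives a bound on $u$ on compact sets in terms of $\int \log^+\log^+ M$. Here $\log^+ M\simeq \log(1/m)$, and its integrability against $dt/t^2$ is exactly the hypothesis. I would apply Levinson (or Domar's theorem) on the boxes $\{|x|<R, -m(R)<y<1\}$ scaled appropriately. That shows $u$ is bounded on the real line by a function $h(x)$ with $\int \log^+ h/(1+x^2)<\infty$, and in fact it shows $\int u(x)\,dx/(1+x^2)<\infty$, uniformly on lines $y=\epsilon$. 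Then $u$ has a Poisson majorant in $\bH$, so $F$ is of bounded type. The main obstacle is this step: controlling growth along the real direction with only local Schottky information, combined with the Levinson–Domar log-log integrability.

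For part (b), the construction takes the elliptic modular function $\lambda\colon\bH\to\bC\setminus\{0,1\}$, which is the universal covering map, and composes it with a conformal map $\psi$ of $\bH(-m)$ into $\bH$. The map $\psi$ should be chosen so that on $\bH$ the composition $F=\lambda\circ\psi$ fails to be Nevanlinna. Near the cusp at $\infty$, $\lambda$ behaves like $16e^{i\pi\tau}$, so $\log|1/\lambda(\tau)|\approx \pi\,\im\tau$. Hence $F$ is of bounded type in $\bH$ iff $\log^+|1/F|$ has a harmonic majorant, which roughly means $\im\psi$ has a harmonic majorant on the part of $\bH$ where it is large. The map $\psi$ is the conformal map onto $\bH$ normalized at $\infty$. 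Divergence of $\int\log^-m/t^2$ means, by Ahlfors-distortion or Warschawski estimates, that $\im\psi(x+i)$ is not dominated by any Poisson integral. Equivalently, the harmonic measure of $\bH(-m)$ in comparison with $\bH$ degenerates: $\im\psi(z)-\im z$ is not bounded by the Poisson integral, since the boundary values $\im\psi$ on $\bR$ behave like $\log(1/m)$. To avoid approaching the real cusps of $\lambda$, I would use $\lambda$ only near $\infty$, or replace $\lambda$ by $\lambda\circ(\text{map onto a strip})$. The verification that the Poisson-integral divergence yields unbounded type is routine once the boundary behaviour of $\im\psi\simeq\log(1/m(x))$ on $\bR$ is established.
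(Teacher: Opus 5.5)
\textbf{Part (a) has a genuine gap.} Everything you extract from the omitted values is pointwise: Schottky, i.e.\ hyperbolic-Lipschitz continuity of $\log(1+\log^+|F|)$. Through the Riemann map this gives on $\bR$ only $\log^+|F(x)|\lesssim (1+x^2)/m(x)$. The integral of that bound against ${\rm d}x/(1+x^2)$ diverges even for constant $m$.

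The ``Levinson--Domar variant'' you then invoke is not an available theorem, and it is false as stated. Take $m\equiv 1$: the subharmonic function $u(z)=|z|$ satisfies $\log u\le |z|$, but $\int_\bR u(x)\,{\rm d}x/(1+x^2)=\infty$, so $u$ has no harmonic majorant in $\bH$. Levinson's log-log theorem controls blow-up transversal to a line and yields only local boundedness. Your last step, from $u\le h$ with $\int\log^+h\,{\rm d}x/(1+x^2)<\infty$ to $\int u\,{\rm d}x/(1+x^2)<\infty$, is a non sequitur; it is precisely the gap between $\log(1/m)$ and $1/m$.

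What is missing is an \emph{averaged} estimate. The paper gets it as follows.
\begin{itemize}
\item The normal-family bound $\rho_G(\zeta)\lesssim 1/\im\zeta$ holds for $G=F\circ w$ (Lehto--Virtanen: Marty's lemma plus $\operatorname{Aut}(\bH)$-invariance).
\item This transports through the Riemann map $w\colon\bH\to\bH(-m)$, which satisfies $|w'|\simeq 1$ on $\overline{\bH}$ once $m$ is replaced by a tame minorant (Kellogg). The result is $\rho_F(x+iy)\lesssim 1/(y+m(x))$.
\item Inserting this into the Ahlfors--Shimizu form $S_o(r;F)$ of Nevanlinna's half-plane characteristic, the $\theta$-integral produces $\log(t/m(t))/t^2$, which is integrable by hypothesis.
\end{itemize}

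\textbf{For (b), the construction is right but the mechanism you describe is wrong.} Your function $\lambda\circ\psi$ is the paper's $\lambda\circ W$. However, the modification you propose would destroy the example.
\begin{itemize}
\item On $\bR$ one has $\im\psi(x)\lesssim m(x)$ (for tame $m$, where $|\psi'|\simeq 1$), not $\log(1/m(x))$.
\item $\im\psi$ is positive harmonic on $\bH(-m)\supset\overline{\bH}$, so in $\bH$ it is always $c\,y$ plus a Poisson integral. Nothing can fail there.
\item The cusp at $\infty$ only makes $F\to 0$. If you used $\lambda$ only in a region $\{\im\tau\ge c\}$ near that cusp, $F$ would simply be bounded in $\bH$.
\end{itemize}
The unbounded type comes precisely from the real cusps of $\lambda$, which the curve $\Gamma=\psi(\bR)$ runs close to.

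The missing key lemma is the mean growth
$\int_0^2\log^+|\lambda(x+iy)|\,{\rm d}x\gtrsim\log(1/y)$ as $y\downarrow 0$.
It is proved by Jensen's formula in $q=e^{\pi i\tau}$, plus a count of the $\mathsf{SL}_2(\bZ)$-orbit of $i$ (coprime pairs with $\gamma^2+\delta^2\le 1/(2y)$) and pigeonholing among the six values of $\lambda$ on that orbit.

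With this lemma the argument runs as follows.
\begin{itemize}
\item If $F$ were of bounded type, then, since $F(iy)\to 0$, $\log^+|F|$ would be majorized by its Poisson integral.
\item Transported, $\log^+|\lambda|$ would be majorized in $\bH(n)=\psi(\bH)$ by its harmonic-measure integral over $\Gamma=\{x+in(x)\}$. Here $n$ is even, non-increasing, and $n(x)\le Cm(x/C)$.
\item Pass to a tame majorant $n^*$, comparing the two harmonic-measure integrals via Herglotz. Its graph is flat at height $n_k$ over intervals $I_k\ni 2^k$, and there harmonic measure is $\simeq{\rm d}t/t^2$.
\item The integral is then $\gtrsim\sum_k 2^{-k}\log(1/n_k)=\infty$, a contradiction.
\end{itemize}
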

This result arose as a by-product of our attempt to (negatively) resolve the following question originating in Nevanlinna's work~\cite{Nev} (cf.~\cite[item 6]{Ostr}):

\begin{question}\label{quest_Nevanlinna}
Let $F$ be a meromorphic function in $\bC$.
Suppose that it omits in a half-plane three distinct values from the extended complex plane.
Does it follow that $F$ is of bounded type in that half-plane?
\end{question}

It is well-known that the elliptic modular function~\cite[Chapter~7, Section~3.4]{Ahlfors} is analytic in the upper half-plane, 
omits there the values $0$ and $1$, and is not of bounded type there (we will provide details while proving part (b) 
of Theorem~\ref{main_thm}). It is also 
well-known~\cite[Chapter~VI \S4 and Chapter~VII \S1]{Nev-book} that if a meromorphic function
in a simply-connected planar domain  omits a set of positive logarithmic capacity, then it is of bounded type. 

It is also worth mentioning that it 
was a surprise for us to see appearance of the logarithmic integral in this setting\footnote
{In that occassion, let us recall here what Paul Koosis wrote in the preface to~\cite{Koosis}:
{\em
I have loved $\displaystyle \int_{-\infty}^\infty (\log M(t)/(1+t^2) {\rm d}t$ - the logarithmic integral - 
ever since I first read Szeg\H{o}'s discussion about the geometric mean of a function and the theorem 
named after him in his book on orthogonal polynomials, over 30 years ago. Far from being an isolated artifact, this object playes an important role in many diverse and seemingly unrelated investigations 
about functions of one real or complex variable, and a serious account of its appearances would involve
a good deal of the analysis done since 1900.
}}.

\begin{remark}
\rm After this paper was submitted, a preprint \cite{HZ} appeared 
containing a negative answer to Question~\ref{quest_Nevanlinna}.
In that work, Yixin He and Teng Zhang  constructed a meromorphic function $F$ 
in the plane which omits $0,1$ and $\infty$ in $\bH$ and is not of bounded type in $\bH$.
We will briefly discuss their construction in Section~\ref{sect:brief}.
\end{remark}

\paragraph*{Acknowledgement.}\ 
We thank the referee, whose questions and comments helped us improve the presentation.
We thank Yixin He and Teng Zhang, who informed us about their work~\cite{HZ} and
drew our attention to an inaccuracy in the statement of Edrei's result in the original version of this work.

The work of Aleksei Kulikov was supported by the United States – Israel Binational Science Foundation 
BSF Grant 2020019, the Israel Science Foundation ISF Grant 1288/21, and by the VILLUM Centre of Excellence 
for the Mathematics of Quantum Theory (QMATH) with Grant No.10059.  
The work of Mikhail Sodin was supported by the Israel Science Foundation ISF grants 1288/21, 2319/25,
and the United States – Israel Binational Science Foundation BSF grants 2020019, 2024142.

\section{Functions of bounded type in the upper half-plane}

\subsection{Nevanlinna's work}
The question goes back to Nevanlinna's work~\cite{Nev} 
on the distribution of values of meromorphic functions in a half-plane. Therein, Nevanlinna introduced the 
characteristics of functions meromorphic in the {\em closed} upper half-plane $\overline\bH$. 
Let $f$ be a function meromorphic in $\overline\bH$ (that is, meromorphic in a domain 
$G_f \supset \overline\bH$), and $\{w_n\}$ be the set of its poles in $\bH$, 
counted with multiplicities. 
Then
\begin{align*}
c(r; f) &= \sum_{1<|w_n|\le r} \sin(\arg w_n), \\
C(r; f) &= 2\int_1^r c(t; f) \left( \frac1{t^2} + \frac1{r^2}\right)\, {\rm d}t \\
&= 2\sum_{1<|w_n|<r} \left( \frac1{|w_n|} - \frac{|w_n|}{r^2} \right) \sin(\arg w_n), \\
A(r; f) &= \frac1{\pi}\, \int_1^r \left( \frac1{t^2} - \frac1{r^2} \right) \bigl[ \log^+ |f(t)| + \log^+|f(-t)| \bigr]\,
{\rm d}t,  \\
B(r; f) &= \frac2{\pi r}\, \int_0^\pi \log^+|f(re^{i\theta})| \sin \theta\, {\rm d}\theta, \\
S(r; f) &= A(r; f) + B(r; f) + C(r; f).
\end{align*}
We list the basic asymptotic properties of these characteristics for $r\to\infty$:
\begin{enumerate}
\item If $D$ is any of the characteristics $A$, $B$, and $C$, then $D(r; f_1 f_2) \le D(r; f_1) + D(r; f_2)$
and $D(r; f_1 + f_2) \le D(r; f_1) + D(r; f_2) + \log 2$.
In particular, $S(r; f_1 f_2) \le S(r; f_1) + S(r; f_2)$ and $S(r; f_1+f_2)\le S(r; f_1) + S(r; f_2) + 3\log 2$.
\item For $a\in \bC$, 
$ S(r; 1/(f-a)) = S(r; f) + O(1)$ with an implicit constant depending on~$a$.
\item There exists a non-decreasing function $S_o(r; f)$ such that $S(r; f) = S_o(r; f) + O(1)$.
\item $S(r; f)$ is bounded if and only if the function $f$ is of bounded type in $\bH$.
\end{enumerate}
The first property is straightforward. The second one is an analogue of the first fundamental theorem.
The proof of the second property~\cite[Chapter~I, Theorem~5.1]{GO} is based on the Carleman integral formula.
One way to prove the third property is to introduce the Ahlfors--Shimizu geometric form of the characteristics
$S$:
\begin{equation*}\label{eq:AS}
S_o(r; f) = \frac1\pi\, \int_1^r \left( \frac1t - \frac{t}{r^2} \right)  \left[ \,
\int_0^{\pi} \rho_f(te^{i\theta})^2 \,
\sin\theta\, {\rm d}\theta  \, \right]  t\, {\rm d}t 
\end{equation*}
where $\rho_f = |f'|/(1+|f|^2)$ is the spherical derivative of $f$. The function $r\mapsto S_o(r; f)$ is
non-decreasing, and a computation~\cite[Chapter~I, Theorem~5.4]{GO} shows that 
\[
S(r; f) = S_o(r; f) + O(1), \quad r\to\infty.
\]
In the fourth property, the direction 
\[
f \ {\rm is\ of\ a\ bounded\ type\ } \Longrightarrow S(r; f) \ {\rm is\ bounded}  
\]
follows from the Nevanlinna factorization of the functions of bounded type in $\bH$, since for
each factor in the Nevanlinna factorization the characteristics $S$ stays bounded. The opposite 
direction was proven in~\cite[Satz 1, p. 31]{Nev}\footnote{
Here is a brief sketch of the proof. Boundedness of the characteristics $C(r; f)$ implies that the poles of  
$f$ satisfy the Blaschke condition in $\bH$. By property 2 of $S(r; f)$, it also implies boundedness of 
$C(r; 1/f)$, i.e., the zeroes of $f$ also satisfy the Blaschke condition. Therefore, we can separate from 
$f$ the Blaschke products $\mathscr B_0$ and $\mathscr B_\infty$ corresponding to its zeroes and poles.  Similarly,
boundedness of $A(r, f)$ and $A(r, 1/f)$ allows us to define the outer factor $\mathscr O_f$ such that 
$|\mathscr O_f(x)| = |f(x)|$, $x\in\bR$. Separating these factors from $f$,
we arrive at a meromorphic function $G$ in $\overline\bH$, having neither zeroes, nor poles
in $\bH$ and satisfying $|G|=1$ on $\bR$. Let $h=\log |G|$. It is    
a harmonic function in $\bH$ that continuously vanishes on $\bR$
and satisfies 
\[
\int_0^\pi h^+(re^{i\theta}) \sin\theta \, {\rm d}\theta = O(r), \quad r\to\infty. 
\]
By the Phragm\'en--Lindel\"of principle, 
this yields that $h(z)=a\, {\rm Im\,} z$ with $a\in \bR$. Thus, $G(z)=\Theta e^{iaz}$, where $\Theta$ is a
unimodular constant. Then
\[
f(z) = \Theta \frac{\mathscr B_0(z)}{\mathscr B_\infty (z)}\, \mathscr O_f(z) e^{iaz},
\] 
and therefore, $f$ is of bounded type.
}.

\medskip
Nevanlinna showed~\cite[Satz~II and Hilfssatz, p. 18]{Nev} 
that {\em if $F$ is a meromorphic function in $\bC$ 
of finite order of growth which omits in $\bH$ three distinct values, then}
$S(r; F) = O(1)$ for $r\to\infty$, and therefore, $F$ is of bounded type in $\bH$. 
That is, Question~2 has a positive answer provided that $F$ has finite order of growth.
The proof followed the same strategy as his proof of the second fundamental theorem and was based on estimates of 
the characteristics $A(r; F'/F)$ and $B(r; F'/F)$ of the logarithmic derivative.

\subsection{Ostrovskii's refinement} 
At the beginning of the 1960s, Ostrovskii improved Nevanlinna's result and showed that the assumption that
the meromorphic function $F$ has finite order can be replaced by the weaker condition
\begin{equation}\label{eq:Ostr}
\int_1^\infty \frac{\log^+ T(r; F)}{r^2}\, {\rm d}r<\infty,
\end{equation}
where $T(r; F)$ is the usual Nevanlinna characteristics of meromorphic functions in the complex plane.
In particular, if the function $F$ is entire, then an equivalent condition is 
\[
\int_1^\infty \frac{\log^+\log^+ M(r; F)}{r^2}\, {\rm d}r<\infty,
\]
where $M(r; F) = \max_{|z|\le r} |F(z)|$. His proof
 (see Lemma~2.2 in~\cite[Chapter~VI \S2]{GO}) was also based on a careful estimate of 
 $A(r; F'/F) +B(r; F'/F)$ \cite[Chapter III, Theorems 3.1 and 3.3]{GO} 
(the more difficult part here is the estimate of $A(r; F'/F)$, given in Lemma~3.1 in~\cite[Chapter~III \S3]{GO}). 
That is, {\em Question~2 has a positive answer for meromorphic functions satisfying condition}~\eqref{eq:Ostr}.

Apparently, Nevanlinna  hoped\footnote
{He wrote in~\cite[p.18]{Nev}:
``...dass das Restglied $R(r)$ im allgemeinen {\em von niedrigerer Grössenordnung als die Fundamentalgrösse S(r, f) ist}. 
Wir wollen bei dieser Gelegenheit auf eine allgemeine Untersuchung des Restgliedes nicht eingehen, sondern beschränken 
uns auf den einfachsten Fall, wo $f(x)$ in der {\em ganzen endlichen Ebene} eindeutig, meromorph und von {\em endlicher Ordnung} ist.''
}
to give a positive answer to Question~2 based on estimates of the logarithmic derivative
$F'/F$. This hope evaporated 
after Gol'dberg~\cite{Gold} constructed an example of a meromorphic function $F$ in $\bC$ with $S(r; F) \equiv 0$,
while $A(r; F'/F)$ grows faster than any given function tending to $\infty$ as $r\to\infty$.

\subsection{Edrei's theorem}
Another related result is due to Edrei~\cite[Theorem~1]{Edrei}. 
It states that {\em if $F$ is a meromorphic function in $\bC$ having only real $a$-, $b$-, and $c$-points 
($a, b, c$ are distinct values in the extended complex plane) and at least one of the values
$a$, $b$, and $c$ has a positive Nevanlinna deficiency, then the order of growth 
of $F$ does not exceed one}. In particular, this holds for entire functions having only real
$a$- and $b$-points, where $a$ and $b$ are distinct values in $\bC$. 

A remarkable feature of this theorem is an absence of any a priori 
assumption on the growth of $F$. Combining Edrei's theorem with the aforementioned theorem of Nevanlinna, we conclude that $F$
has bounded type in the upper and lower half-planes. Entire functions possessing this property
are called {\em entire functions of Cartwright class}.

\section{Proof of Theorem~\ref{main_thm}}

\subsection{Preliminaries}
First, we regularize the function $m$. We say that the function $m$ is {\em tame} if, in addition to being even
and non-increasing on $\bR_{\ge 0}$, it is $C^2$-smooth and satisfies the following conditions:
\begin{enumerate}
\item $m(x), |x m'(x)|, x^2|m''(x)|\to 0$ as $|x|\to \infty$;
\item it is constant on the intervals $|x|\le 1$ and $2^k-2^{k-3}\le |x| \le 2^k + 2^{k-2}$, $k\ge 3$.
\end{enumerate}

\begin{lemma}\label{lemmaA}
Suppose that $m\colon \bR \to \bR_{> 0}$ is an even continuous  function non-increasing on
$\bR_{\ge 0}$.

\smallskip\noindent {\rm (i)} If the logarithmic integral $\displaystyle \int_\bR \frac{\log^- m(x)}{1+x^2}\, {\rm d}x$ converges, 
then the function $m$ has a tame minorant $m_*\le m$ whose logarithmic integral also converges.

\smallskip\noindent {\rm (ii)} If the logarithmic integral $\displaystyle \int_\bR \frac{\log^- m(x)}{1+x^2}\, {\rm d}x$ diverges,
then the function $m$ has a tame majorant $m^*\ge m$ whose logarithmic integral also diverges.
\end{lemma}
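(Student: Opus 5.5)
The plan is to work on dyadic blocks and to interpolate the function $m$ itself, not $\log m$, between constant plateaus. Since everything is even, it suffices to work on $x\ge 0$. Put $L(x)=\log^- m(x)$; it is non-decreasing on $\bR_{\ge0}$ because $m$ is non-increasing. By monotonicity, the logarithmic integral converges if and only if
\[
\sum_{k} \frac{L(2^{k+j})}{2^k}<\infty \quad \text{for any fixed } j\in\bZ ,
\]
since shifting $j$ changes the sum only by a factor $2^{j}$. Fix once and for all a $C^\infty$ non-decreasing function $\psi\colon\bR\to[0,1]$ with $\psi=0$ on $(-\infty,\tfrac14]$ and $\psi=1$ on $[\tfrac78,\infty)$. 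All derivatives of $\psi$ then vanish at the endpoints of its transition region, so the glued function is automatically $C^2$.

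Given a sequence of levels $\mu_k$, $k\ge3$, I define
\[
n(x)=\mu_k+(\mu_{k+1}-\mu_k)\,\psi\bigl((x-2^k)/2^k\bigr)\qquad\text{for } 2^k\le x\le 2^{k+1},
\]
and $n(x)=\mu_3$ for $0\le x\le 8$. Here $\mu_k$ is non-increasing, positive, and tends to $0$. The function $n$ is constant on $[2^k-2^{k-3},2^k+2^{k-2}]$ for each $k\ge 3$: the left piece equals $\mu_k$ on $[2^{k}-2^{k-3},2^{k}]$ because $(x-2^{k-1})/2^{k-1}\ge 7/8$ there, and the right piece equals $\mu_k$ on $[2^k,2^k+2^{k-2}]$. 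It is also constant on $|x|\le1$, non-increasing, and $C^2$. On $[2^k,2^{k+1}]$ we have
\[
x|n'(x)|\lesssim |\mu_{k+1}-\mu_k|\le \mu_k ,\qquad x^2|n''(x)|\lesssim \mu_k ,
\]
with constants depending only on $\psi$. Since $\mu_k\to0$, condition 1 of tameness holds. Interpolating $m$ linearly, rather than $\log m$, is exactly what makes these bounds free even when the jumps of $\log\mu_k$ are huge. So tameness reduces entirely to choosing $\mu_k$ with the right comparison to $m$.

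\textbf{Part (i).} Take $\mu_k=\exp\bigl(-L(2^{k+1})-\log k\bigr)$; the $\log k$ term forces $\mu_k\to0$ even if $m$ does not tend to $0$. On $[2^k,2^{k+1}]$ the value $n(x)$ lies between $\mu_{k+1}$ and $\mu_k$, so $n(x)\le\mu_k\le e^{-L(2^{k+1})}\le m(2^{k+1})\le m(x)$. On $[0,8]$ the same bound gives $n=\mu_3\le m(16)\le m(x)$. Hence $m_*=n$ is a minorant. On $[2^k,2^{k+1}]$, $\log^- m_*\le L(2^{k+2})+\log(k+1)$, so
\[
\int_1^\infty\frac{\log^- m_*}{x^2}\,{\rm d}x\lesssim\sum_k\frac{L(2^{k+2})+\log(k+1)}{2^k}+O(1)<\infty
\]
by the dyadic reformulation.

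\textbf{Part (ii).} Divergence forces $L$ to be unbounded, so $m\to0$. Take $\mu_k=\min\bigl(m(0),\, m(2^{k-2})\bigr)$, which is non-increasing and tends to $0$. On $[2^k,2^{k+1}]$ we get $n(x)\ge\mu_{k+1}= m(2^{k-1})\ge m(x)$; on $[0,8]$ we have $n=\mu_3=m(2)$. The interval $[2,8]$ is fine, but on $[0,2]$ the majorant could fail. I would fix this by simply raising the first few levels, replacing $\mu_k$ by $m(0)$ for $k\le 4$, which keeps $m^*\ge m$ everywhere. Then $\log^- m^*\ge L(2^{k-1})$ on $[2^k,2^{k+1}]$, so
\[
\int_1^\infty\frac{\log^- m^*}{x^2}\,{\rm d}x\gtrsim\sum_k\frac{L(2^{k-1})}{2^k}=\infty .
\]

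\textbf{Main obstacle.} The only delicate point is simultaneously keeping the comparison with $m$ (which needs the index shift $k\pm1$ or $k\pm2$), the plateau positions prescribed in condition 2, and the derivative bounds $x|m'|,\,x^2|m''|\to0$. The derivative bounds are precisely why I interpolate $m$ itself: interpolating $\log m$ would produce factors like $(\log\mu_{k+1}-\log\mu_k)\,\mu_k$, which need not tend to $0$. Beyond that, I expect only a routine check of the index bookkeeping.
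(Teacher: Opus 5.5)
Your construction is essentially the paper's: plateau levels taken from shifted dyadic values of $m$, glued by a rescaled fixed smooth monotone step, so that $x|m'|$ and $x^2|m''|$ are bounded by the level differences. In (i) you add the decaying factor $1/k$ where the paper uses $2^{-k-3}$ to force $m_*\to0$; both work. Two bookkeeping slips need correcting.

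\emph{First, the plateau check is wrong.} For $x\in[2^k-2^{k-3},2^k]$ you have $(x-2^{k-1})/2^{k-1}\in[3/4,1]$, not $[7/8,1]$. Your $\psi$ is only guaranteed to equal $1$ on $[7/8,\infty)$. So your $n$ need not be constant on $[2^k-2^{k-3},2^k-2^{k-4})$ whenever $\mu_{k-1}\ne\mu_k$, and condition 2 of tameness fails as written. The fix is to require $\psi=0$ on $(-\infty,1/4]$ and $\psi=1$ on $[3/4,\infty)$. This puts the transition exactly in the gap $(2^k+2^{k-2},\,2^{k+1}-2^{k-2})$ between consecutive plateaus, which is where the paper smooths. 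Your derivative bounds and comparison arguments are unaffected.

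\emph{Second, in (ii) the lower bound on $\log^- m^*$ goes the wrong way.} On $[2^k,2^{k+1}]$ you have $\mu_{k+1}\le m^*\le\mu_k=m(2^{k-2})$. The lower level $\mu_{k+1}=m(2^{k-1})$ gives an upper bound $\log^- m^*\le L(2^{k-1})$, not the lower bound you state. The correct estimate uses the upper level: $\log^- m^*\ge L(2^{k-2})$. Divergence of $\sum_k L(2^{k-2})2^{-k}$ then follows from the shift-invariance you already observed, so the conclusion stands.
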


\medskip\noindent{\em Proof of Lemma~\ref{lemmaA}}: 
First, we assume that the logarithmic integral converges.
Set
\[
m_*(x) = 
\begin{cases}
m(8), & |x|\le 1, \\
\min\{ m(2^{k+3}), 2^{-k-3}\}, & 2^k - 2^{k-3} \le |x| \le 2^k +2^{k-2}, k\ge 3.
\end{cases}
\]
This formula defines a piecewise constant function $m_*$ on certain subintervals of $\bR$.
On the remaining intervals we define the function using a rescaled version of a fixed smooth decreasing 
function on $[0, 1]$, for which the first and second derivatives vanish at the endpoints,
keeping $m_*$ even and non-increasing on $\bR_{\ge 0}$. 
Since $m_*(x)\le \frac{1}{x}$, we clearly have $m_*(x)\to 0$. Since the size of the jump between the points 
$2^k+2^{k-2}$ and $2^{k+1}-2^{k-2}$ is $o(1)$ as $k\to\infty$, after the smoothing the function $m_*$ 
will satisfy condition 1 in the definition of tameness. 
Furthermore, for $2^k \le |x| \le 2^{k+1}$, $m_*(x) \le m(2^{k+3}) \le m(x)$,
so $m_*$ is indeed a minorant of $m$. Finally, by monotonicity of the functions $m$ and $m_*$,
\begin{align*}
\int_1^\infty \frac{\log^-m(x)}{x^2}\, {\rm d}x <\infty  &\Longrightarrow \sum_k \frac{\log^- m(2^k)}{2^{k}}<\infty \\
&\Longrightarrow \sum_k \frac{\log^- m_*(2^k)}{2^{k}}<\infty  \Longrightarrow \int_1^\infty \frac{\log^- m_*(x)}{x^2}\, {\rm d}x <\infty\,,
\end{align*}
where in the second step we used that $\sum_k \frac{\log^-(2^{-k-3})}{2^k}$ clearly converges, so the extra $2^{-k-3}$ in the definition of $m_*(x)$ does not affect us.
 
The proof in the second case (the logarithmic integral diverges) is quite similar. First, 
we note that $m(x)=o(1)$ as $x\to\infty$, since the logarithmic integral is divergent and
$m$ does not increase. We set
\[
m^*(x) = 
\begin{cases}
m(0), & |x|\le 1, \\
m(2^{k-3}),  & 2^k - 2^{k-3} \le |x| \le 2^k +2^{k-2}, k\ge 3.
\end{cases}
\]
Then we smooth out the jumps of $m^*$, keeping $m^*$ even and non-increasing on $\bR_{\ge 0}$,
so that after the smoothing the function $m^*$
will satisfy condition 1 in the definition of tameness. 
Furthermore, by construction, $m^*$ is indeed a majorant of $m$, and by monotonicity of the functions $m$ and $m^*$,
the logarithmic integrals of $m$ and $m^*$ are equi-divergent:
\[
\int^\infty \frac{\log^-m(x)}{x^2}\, {\rm d}x  = \infty 
\Longrightarrow \int^\infty \frac{\log^- m^*(x)}{x^2}\, {\rm d}x  = \infty.
\]
\mbox{}\hfill $\Box$

\medskip

\begin{lemma}\label{lemmaB} Suppose that the function $m$ is tame, and 
let $w\colon \bH \to \bH( \pm m)$ be a biholomorphic map such that
$w(iy)$ is purely imaginary and  $w(iy)\to \infty$ as $y\to\infty$. 
Then the function $w'$ extends continuously to the real axis,
and $|w'(z)| \simeq 1$ everywhere in the closed half-plane $\overline\bH$. 
\end{lemma}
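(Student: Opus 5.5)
We will prove Lemma~\ref{lemmaB} by comparing $w$ with the identity, using a quasiconformal perturbation argument; the smallness of the derivatives of $m$ is what makes this work. Consider the explicit diffeomorphism
\[
\Phi(x+iy) = x + i\bigl(y \pm m(x)\bigr)
\]
of $\bH$ onto $\bH(\pm m)$ (for the sign $-$ it maps onto $\{y>-m(x)\}$, for $+$ onto $\{y>m(x)\}$). This map is $C^2$ and extends to $\overline\bH$. Its Jacobian matrix differs from the identity by the entry $\pm m'(x)$, which is uniformly bounded and tends to zero at infinity. So $\Phi$ is quasiconformal with complex dilatation
\[
\mu_\Phi = \frac{\pm i\, m'(x)/2}{1 \mp i\,m'(x)/2}\cdot(\text{unimodular factor}),
\]
and $|\mu_\Phi| \lesssim |m'(x)|$.

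The plan is then to write $w = \Phi\circ g$, where $g = \Phi^{-1}\circ w \colon \bH\to\bH$ is quasiconformal with dilatation determined by $\mu_\Phi$. Equivalently, we study $\psi = w^{-1}\circ\Phi$, and show that $g$ has derivative comparable to $1$ up to the boundary. The normalization that $w(iy)$ is imaginary and $w(iy)\to\infty$ fixes $w$ up to a positive scaling factor. Since $m$ is even, symmetry gives $w(-\bar z) = -\overline{w(z)}$.

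\textbf{Step 1: extension across the boundary.} Reflect across $\bR$ to obtain a quasiconformal map $G$ of $\bC$. Its dilatation $\mu$ satisfies $|\mu(z)| \lesssim |m'(\re z)|$; it is H\"older (indeed $C^1$), since $m\in C^2$; and it decays like $o(1/|x|)$. Because $m$ is constant for $|x|\le1$ and on the dyadic plateaus, $\mu$ vanishes on large vertical strips.

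\textbf{Step 2: derivative bounds.} Local boundedness and continuity of $w'$ up to $\bR$ follow from the Kellogg--Warschawski theorem, since $\partial\bH(\pm m)$ is a $C^2$ curve. For global uniformity we use a rescaling argument. For $|z|\simeq R=2^k$, rescale by $z\mapsto z/R$. The rescaled boundary curve $y = \pm m(Rx)/R$ has $C^2$ norm controlled by $\sup\bigl(|m|/R,\ |m'|,\ R|m''(Rx)|\bigr)$, and $R|m''(Rx)| \simeq x^{-2}\cdot R^2x^2|m''(Rx)|/R$, which is small. The rescaled domains therefore converge in $C^2$ to the half-plane. Kellogg--Warschawski, in its quantitative form with uniform constants for $C^{1,\alpha}$-close domains, then gives that $|w'|$ is nearly constant, say within a factor $1+o(1)$, on each dyadic annulus $\{R/2\le|z|\le 2R\}\cap\overline\bH$.

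\textbf{Step 3: matching across scales (the main obstacle).} Step 2 only gives $|w'(z)|\approx c_k$ on the $k$-th annulus. We must show that the product of the ratios $c_{k+1}/c_k$ stays bounded, i.e.\ that the $o(1)$ errors are summable. My plan is to use the plateaus: on $2^k-2^{k-3}\le|x|\le2^k+2^{k-2}$ the boundary is exactly a horizontal line, so $\log w'$ is harmonic there, real on the flat pieces after reflection, and extends across them. We can also use the explicit representation
\[
\log w'(z) = \frac{1}{\pi}\int_\bR \frac{\theta(t)}{t-z}\,{\rm d}t + \text{const},
\]
where $\theta(t)=\arctan(\pm m'(x(t)))$ is the boundary tangent angle transported by $w$. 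Boundedness of $\log|w'|$ then amounts to a Hilbert-transform bound for $\theta$. Here $\theta$ is bounded and supported off the plateaus; on each dyadic transition interval it is a bump of size $o(1)$ whose integral is $\pm\bigl(m(2^{k+1})-m(2^k)\bigr)$-ish divided by length, so it contributes $O\bigl(m(2^k)/2^k\bigr)$ away from that interval. Summing the contributions of the bumps then gives a uniformly bounded Hilbert transform. Because this argument is circular (it uses $|w'|\simeq1$ to transport $\theta$), I would first run it with the a priori bounds from Step 2, which give $|w'|\simeq c_k$ with $c_k$ at most polynomially varying. I expect this bootstrap, and in particular controlling the global constant term, to be the hardest part.
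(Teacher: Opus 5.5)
Your proposal does not yet prove the lemma. The whole content of the statement is that $|w'|$ is bounded above and below uniformly up to $\infty$, and that is your Step~3, which you leave as a plan.

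Step~2 only gives $c_{k+1}/c_k=1+o(1)$, i.e.\ $c_k=2^{o(k)}$. This is compatible with $|w'|\to0$ or $|w'|\to\infty$ (e.g.\ $c_k\simeq k$). Step~2 also rests on a Kellogg--Warschawski estimate with constants uniform over the rescaled domains, and that would need its own proof. The rescaled boundaries $y=\pm m(Rx)/R$ are $C^2$-flat only away from the origin: their second derivative $\pm Rm''(Rx)$ can be of order $R$ for $|x|$ of order $1/R$.

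The Hilbert-transform argument meant to close the gap is circular as written. Each of the following depends on the unknown $c_k$:
the convergence of $\frac1\pi\int_\bR\theta(t)(t-z)^{-1}\,{\rm d}t$;
the identification of $\arg w'$ with the Poisson integral of $\theta$, which needs an a priori growth bound on $\arg w'$;
the mass of the $k$-th bump of $\theta$ in the $t$-variable, which is about $(m(2^k)-m(2^{k+1}))/c_k$.
The bootstrap might be completable, since the bump masses are ultimately controlled by the finite total variation of $m$. But you do not carry it out, and you explicitly leave the constant term unresolved. Step~1 (the quasiconformal extension) is never used.

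The missing idea is to compactify instead of matching scales. Take $C(\zeta)=i(1+\zeta)/(1-\zeta)$ and $J(w)=1/(w+iA)$ with $A>m(0)$. Then $p=J\circ w\circ C$ maps $\bD$ onto the bounded Jordan domain $J(\bH(\pm m))$.

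The tameness conditions $m,\ xm',\ x^2m''\to0$ are exactly what make the boundary parametrization $H(s)=J\bigl(1/s\pm i\,m(1/s)\bigr)$, $H(0)=0$, of class $C^2$ through the image of $\infty$. One checks $H'(s)\to1$ and $H''(s)\to-2iA$ as $s\to0$.

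A single global application of Kellogg's theorem then makes $p'$ continuous and non-vanishing on $\overline\bD$. Hence $w'(C(\zeta))=p'(\zeta)/\bigl(J'(w(C(\zeta)))\,C'(\zeta)\bigr)$ is continuous and non-vanishing at every finite point of $\overline\bH$.

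Near $\infty$, use $J'(w)=-J(w)^2$ and $p(\zeta)=b(\zeta-1)+o(|\zeta-1|)$ with $b=p'(1)\neq0$. These give $J'(w(C(\zeta)))\,C'(\zeta)\to-2ib^2$, so $w'(z)\to i/(2b)\neq0$ as $z\to\infty$.

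Continuity, non-vanishing, and a finite non-zero limit at $\infty$ give $|w'|\simeq1$ on $\overline\bH$. This replaces your Steps~2 and~3 entirely, with no dyadic decomposition at all.
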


\medskip\noindent{\em Proof of Lemma~\ref{lemmaB}}:
We will present the proof for the domain $\bH(-m)$. For the domain $\bH(m)$, the difference is 
purely notational. 

The result follows from the classical Kellogg theorem (see, for instance, \cite[Chapter~II, Theorem~4.3]{GM}).
It yields that if $\Omega$ is a bounded Jordan domain with a $C^2$-smooth boundary, 
and $p\colon \bD\to \Omega$ is a biholomorphic map, then $p$ extends to a $C^1$-diffeomorphism 
from $\overline\bD$ to $\overline \Omega$ and $p'$ does not vanish on $\partial\bD$.  
To apply it here, we pre-compose our map $w$ with the 
Cayley transform $C(\zeta) = i (1+\zeta)/(1-\zeta)$ and post-compose it with $J(w)=1/(w + iA)$,
where $A>m(0)$, letting $p = J \circ w \circ C$. The function $p$ maps the unit disk onto 
the Jordan domain $J \bH(- m)$. 
We claim that the boundary of $J \bH(- m)$ is $C^2$-smooth. This is evident everywhere except
at the origin ($J$ maps infinity to the origin). 
To check the smoothness at the origin, we write 
\[
h(x) = \frac1{x+i(A-m(x))}\,,
\]
and let 
\[
H(s) = 
\begin{cases}
h(1/s), & s\ne 0 \\ 
0, & s=0.
\end{cases}
\]
So, we need to check that $H$ is a $C^2$-function on $[-1, 1]$. Since $H$ is continuous on $[-1, 1]$, and $C^2$
on $[-1, 0) \cup (0, 1]$, it suffices to check that the limits of $H'$ and $H''$ as $s\to \pm 0$
exist and coincide: $H'(+0)=H'(-0)$ and $H''(+0)=H''(-0)$. Let $\phi (s) = 1+is(A-m(1/s))$, $\phi(0)=1$. 
Then $H(s) = s/\phi (s)$. 
We have $ \phi'(s) = i(A-m(1/s)) + im'(1/s)/s$ and $\phi''(s) = - im''(1/s)/s^3 $. By the tameness of $m$,
we have $\phi' (s) = iA + o(1)$ and $s\phi''(s) = o(1)$, as $s\to 0$. Thus,
\begin{align*}
H'(s) &= \frac1{\phi (s)} - \frac{s\phi'(s)}{\phi(s)^2} \\
&= 1+o(1), \quad s\to 0,
\end{align*}
and 
\begin{align*}
H''(s) &= -2\,\frac{\phi'(s)}{\phi(s)^2} + 2\,\frac{s\phi'(s)^2}{\phi(s)^3} - \frac{s\phi''(s)}{\phi(s)^2} \\
&= -2iA + o(1), \quad s\to 0,
\end{align*}
proving the claim. So,  Kellogg's theorem can be applied.

Thus, $p'$ extends to a continuous non-vanishing function in the closed unit disk.
For $\zeta\to 1$ within $\bD$, we have
\[
p(\zeta) = b(\zeta-1) + o(|\zeta-1|), \quad b=p'(1)\ne 0.
\] 
Furthermore, 
\[
p'(\zeta) = J'(w(C(\zeta))) w'(C(\zeta)) C'(\zeta), \quad C'(\zeta) = 2i/(1-\zeta)^2,
\] 
and
\[
J'(w) = -1/(w+iA)^2 = - J(w)^2.
\] 
Hence, for $\zeta\to 1$, $\zeta\in \bD$, 
\begin{align*}
J'(w(C(\zeta))) C'(\zeta) &= - p(\zeta)^2 C'(\zeta) \\
&= - (b(\zeta-1)+o(|\zeta-1|))^2 
\cdot \frac{2i}{(1-\zeta)^2} \to -2ib^2,
\end{align*}
and therefore,
\begin{align*}
\lim_{z\to\infty} w'(z) &= \lim_{\zeta\to 1} w'(C(\zeta)) \\
&=\lim_{\zeta\to 1} \frac{p'(\zeta)}{J'(w(C(\zeta))) C'(\zeta)} \\
&= \frac{b}{-2ib^2} = \frac{i}{2b},
\end{align*}
proving more than what was stated in Lemma~\ref{lemmaB}. \hfill $\Box$

\subsection{Proof of Theorem~1(a)}
Proving Theorem~1(a), we assume that the function $m$ is tame. Otherwise, we replace $m$ by its tame minorant 
$m_*$ with convergent logarithmic integral $\displaystyle \int_\bR \frac{\log^- m_*(x)}{1+x^2}\, {\rm d}x$, 
which exists by Lemma~\ref{lemmaA}. 
Then $\bH(-m_*)\subset \bH(-m)$; so, if a meromorphic function 
$F$ omits three distinct values in $\bH(-m)$, a fortiori,  it omits them in $\bH(-m_*)$.

Consider the function $G=F\circ w$, where
$w\colon \bH \to \bH(-m)$ is the Riemann map, as above.
It is meromorphic in $\bH$ and omits three values therein.
Hence, by Montel's theorem, the family of meromorphic functions $\{G\circ S\colon S\in {\operatorname{Aut}}(\bH)\}$ is normal in $\bH$.
Then, following Lehto and Virtanen~\cite[Theorem 3]{LV}, we get\footnote{
For the reader's convenience, we will reproduce their nice short argument.
Consider the normal family $\mathscr M$ of functions holomorphic in $\bH$ 
and omitting the values $0$ and $1$ therein. By Mart\'y's lemma
(a version of the Arzel\'a--Ascoli theorem), the spherical derivatives $\rho_f$ of functions 
from $\mathscr M$ are locally equibounded in $\bH$. In particular,  
$\displaystyle \sup_{f\in\mathscr M} \rho_f (i) < \infty$.
The family $\mathscr M$ is invariant under the action of the group $\operatorname{Aut}(\bH) = 
{\sf SL}_2(\bR)$. So, let
\[
S(z) = \frac{\alpha z + \beta}{\gamma z + \delta}, \quad \alpha, \beta, \gamma, \delta\in\bR, \ \alpha\delta - \beta \gamma =1.
\]
Then
\[
\rho_{f\circ S} (i) = \rho_f(Si) |S'(i)| 
=\rho_f(S(i))\, \frac1{\gamma^2+\delta^2} = \rho_f(S(i))\, \operatorname{Im} S(i).
\]
Thus, $\displaystyle \sup_{f\in\mathscr M} \rho_f(z) \operatorname{Im}\, z = \sup_{f\in\mathscr M} \rho_f(i) < \infty$.
}
\begin{equation}\label{eq:LV}
\rho_G(\zeta) \lesssim \frac1{\im (\zeta)}\,, \qquad \zeta\in\bH.
\end{equation}
Whence, we get
\begin{align}\label{eq:rho_F}
\nonumber \rho_F(z) &= |(w^{-1})'(z)| \rho_G(w^{-1}(z)) \\
\nonumber & \lesssim \frac1{\im (w^{-1}(z))} 
\qquad \qquad \qquad {\rm by\ }\eqref{eq:LV}
\\
\nonumber &= \frac1{ {\rm dist\,}(w^{-1}(z), \partial \bH)} \\
&\simeq \frac1{{\rm dist\,}(z, \partial \bH(-m)) }\,, \qquad \quad z\in \bH(-m),
\end{align}
where we used Lemma \ref{lemmaB} in the second and fourth steps.
To estimate ${\rm dist\,}(z, \partial \bH(-m)) $, we use a simple geometric claim.
\begin{claim}\label{claim:geom1} Let $\phi\colon \bR\to\bR$ be a $C^1$-function with
bounded derivative $L=\| \phi'\|_\infty <\infty$. Then, for every point $(x, y)$ lying above 
the graph $\Gamma_\phi$ of $\phi$, we have
\[
{\rm dist\,}((x, y), \Gamma_\phi) \ge \frac{y-\phi(x)}{\sqrt{1+L^2}}.
\]
\end{claim}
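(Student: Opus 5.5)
The idea is to use the cone condition coming from the Lipschitz bound: the graph $\Gamma_\phi$ stays outside a downward-opening cone with vertex at $(x,y)$, and the distance from $(x,y)$ to that cone's complement is exactly the claimed quantity. Set $h = y-\phi(x)>0$.

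Take an arbitrary point $(t,\phi(t))\in\Gamma_\phi$ and put $s=t-x$. By the mean value theorem, $|\phi(t)-\phi(x)|\le L|s|$, so
\[
y-\phi(t) \ge h - L|s|.
\]
Split into two cases. If $L|s|\ge h$, then the squared distance is at least $s^2 \ge h^2/L^2 \ge h^2/(1+L^2)$. If $L|s|<h$, then $y-\phi(t)\ge h-L|s|>0$, and the squared distance from $(x,y)$ to $(t,\phi(t))$ is at least $s^2+(h-L|s|)^2$.

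It remains to minimize the quadratic $g(u)=u^2+(h-Lu)^2$ over $u=|s|\ge 0$. Its minimum is at $u=Lh/(1+L^2)$, which indeed satisfies $Lu<h$. The minimal value is $h^2/(1+L^2)$, as one checks directly or by Cauchy--Schwarz: $h = Lu + (h-Lu)\le \sqrt{1+L^2}\,\sqrt{u^2+(h-Lu)^2}$. In fact this Cauchy--Schwarz inequality handles both cases at once, because $h\le L|s|+|y-\phi(t)|$ holds for every $t$. So I would write the proof in that single line:
\[
h \le L|s| + |y-\phi(t)| \le \sqrt{1+L^2}\,\bigl|(x,y)-(t,\phi(t))\bigr|.
\]
Taking the infimum over $t$ gives the claim.

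There is no real obstacle here. The only points to watch are that the mean value theorem needs only $C^1$ with bounded derivative, and that the distance is an infimum over the whole graph, which the uniform bound handles. In the paper's application one takes $\phi=-m$ and $L=\|m'\|_\infty$, which is finite for tame $m$.
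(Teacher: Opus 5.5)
Your proof is correct, and the one-line estimate
\[
y-\phi(x)\le L|t-x|+|y-\phi(t)|\le\sqrt{1+L^2}\,\bigl|(x,y)-(t,\phi(t))\bigr|
\]
suffices by itself; as you observed, the case split before it is redundant.

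The paper starts from the same Lipschitz bound $\phi(t)\le\psi(t):=\phi(x)+L|t-x|$, but uses it as a comparison. Since $\Gamma_\phi$ lies on or below the V-shaped graph $\Gamma_\psi$, it concludes ${\rm dist}((x,y),\Gamma_\phi)\ge{\rm dist}((x,y),\Gamma_\psi)$, and then computes the latter as the altitude of a right triangle.

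Your Cauchy--Schwarz step is the algebraic form of that point-to-line computation, applied directly to each point of $\Gamma_\phi$. This gains you two things. First, it skips the comparison step, which tacitly needs that a segment from $(x,y)$ to a point on or below $\Gamma_\psi$ must cross $\Gamma_\psi$. Second, it never divides by $L$, whereas the paper's triangle has a vertex at $(x+a/L,y)$ with $a=y-\phi(x)$ and so needs $L>0$ (the case $L=0$ is trivial). In return, the paper's picture shows at a glance that the constant $1/\sqrt{1+L^2}$ is sharp, with equality for $\phi=\psi$.

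Your heuristic first sentence contains a slip. The graph avoids the upward-opening cone $\{(X,Y)\colon Y>\phi(x)+L|X-x|\}$ with vertex $(x,\phi(x))$, not a downward-opening cone with vertex $(x,y)$. That sentence is not used anywhere, so the argument is unaffected.
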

\begin{proof} Put $\psi (t) = \phi (x) + L|t-x|$. Then $\phi (t) \le \psi (t)$.
Hence, \[ {\rm dist\,}((x, y), \Gamma_\phi) \ge {\rm dist\,}((x, y), \Gamma_\psi). \] 
Let $a=y-\phi (x)$. Then $a>0$ and ${\rm dist\,}((x, y), \Gamma_\psi)$ is the altitude 
$h$ of the right triangle with vertices $(x, \phi (x))$, $(x, y)=(x, \phi(x)+a)$, and $(x+a/L, y)$. 
This triangle has side lengths $a$, $b=a/L$, and $c=\sqrt{a^2 + (a/L)^2} = \displaystyle
a\sqrt{1+L^2}/L$. Since $hc = ab$ (both
expressions are half the area), $h=ab/c = a/\sqrt{1+L^2}$, proving the claim.
\end{proof}
Combining~\eqref{eq:rho_F} with the claim, we get
\begin{equation}\label{eq:rho_F2}
\rho_F(x+iy)  \lesssim \frac1{y+m(x)}\,, 
\qquad x+iy\in \bH(-m).
\end{equation}

Now, we are ready to see that the Nevanlinna characteristics $S(r, F)$ of $F$ in the upper half-plane
remains bounded. For this, we use its  Ahlfors--Shimizu version:
\begin{align*}
S_o(r, F) &= 
\frac1{\pi}\, \int_1^r  t\, {\rm d}t\, \int_0^\pi \left( \frac1t - \frac{t}{r^2} \right) \rho_F(te^{i\theta})^2 
\sin\theta \, {\rm d}\theta \\
&\lesssim \int_1^r {\rm d}t \int _0^\pi \frac{\sin\theta\, {\rm d}\theta}{(t\sin\theta + m(t\cos\theta))^2}
\qquad \qquad \qquad \qquad {\rm by\,} \eqref{eq:rho_F2} \\
&\lesssim \int_1^r {\rm d}t \left[ 
\int_0^{m(t)/t} \frac{\theta\, {\rm d}\theta}{m(t)^2} + \int_{m(t)/t}^{\pi/2} \frac{{\rm d}\theta}{t^2\theta} \right]
\\
&\lesssim O(1)+\int_1^r \log(t/m(t))\, \frac{{\rm d}t}{t^2}  \\
& \lesssim O(1)+\int_1^r \log\frac1{m(t)}\, \frac{{\rm d}t}{t^2} \\&= O(1),
\end{align*}
completing the proof of part (a). \hfill $\Box$

\subsection{Proof of Theorem~1(b)}\label{subsect:Thm1b}
To prove part (b), we use the standard elliptic modular function $\lambda$, which can be defined as an analytic 
continuation of the conformal map of the hyperbolic triangle with vertices $(0, 1, \infty)$ onto the upper half-plane, 
sending the vertices to $(1, \infty, 0)$. We will rely only on its most basic properties, 
which can be found in~\cite[Chapter~7, Sections~3.4, 3.5]{Ahlfors} as well as 
in~\cite[Section~23]{Akhiezer}.  
It is analytic in $\bH$, $2$-periodic, does not
attain the values $0$ and $1$, and satisfies $\lambda (\tau)\to 0$ as ${\rm Im\,}\tau\to \infty$.

\smallskip
Theorem~1(b) will be proven in several steps. 
First, using a little number theory, we show 
that, for $0<y\le 1/2$,
\[
\int_0^2 \log^+ |\la (x+iy)|\, {\rm d}x \gtrsim \log\frac1y\,.
\]
Then we fix a tame function $m$ with divergent logarithmic integral, let
$W\colon \bH(-m)\to\bH$ be the Riemann map such that $W(iy)$ is purely imaginary
and tends to $\infty$ as $y\to\infty$, and let $\Gamma = W(\bR)$. We show that 
$\Gamma$ is a graph of an even non-increasing function $n$, whose logarithmic
integral diverges. We can assume that the function
$n$ is tame (otherwise, using Lemma~\ref{lemmaA} we replace it by its
tame majorant).

We consider the function $F=\la \circ W$ that does not take values $0$ and $1$ in
$\bH(-m)$, and hence in $\bH$. We claim that $F$ is of unbounded type in $\bH$.

If not, then
\[
\int_\bR \frac{\log^+|F(t)|}{1+t^2}\, {\rm d}t < \infty\,,
\]
i.e.,
\begin{equation}\label{eq:lambda-new}
\int_\Gamma \log^+|\la (\zeta)|\, \omega_{\bH(n)}({\rm d}\zeta, \zeta_0) <\infty,
\end{equation}
where $\omega_{\bH(n)}({\rm d}\zeta, \zeta_0)$ is the harmonic measure of $\bH(n)$
evaluated at $\zeta_0 = W(i)$.

Since the function $n$ is tame, it is constant on long intervals $I_k$ centred at $2^k$ 
and of length comparable with $2^k$. Then we show that on the middle third of these
intervals  $\omega_{\bH(n)}$ is equivalent to the measure ${\rm d} t/t^2$, so the 
integral~\eqref{eq:lambda-new} is, up to a constant, at least
\[
\sum_{k\ge k_0} \int_{I_k} \log^+|\la(t+in_k)|\, \frac{{\rm d}t}{t^2}
\gtrsim \sum_{k\ge k_0} 2^{-k} \log\frac1{n(2^k)}.
\]
Finally, since the function $n$ is non-increasing, divergence of its logarithmic integral yields
divergence of the series on the right hand side, contradicting~\eqref{eq:lambda-new}.

\smallskip
Now, let us turn to the details.

\begin{lemma}\label{lemmaC}
There exists a positive numerical constant $c$ such that,
for all sufficiently small $y>0$, we have
\[
\int_0^2 \log^+ |\lambda(x+iy)|\, {\rm d}x \ge c \log (1/y).
\]
\end{lemma}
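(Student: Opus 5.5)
The plan is to use the invariance of $\lambda$ under the congruence subgroup $\Gamma(2)$ together with its behaviour near the cusps. Near the cusp $\infty$ one has $\lambda(\tau)\approx 16 e^{i\pi\tau}$, so $\lambda$ is small there. Near the cusp $1$, which is the cusp mapped to $\infty$ by the triangle map, $\lambda$ has a pole-like singularity. Concretely, using $\lambda(\tau)=1-\lambda(-1/\tau)$ and $\lambda(\tau+1)=\lambda(\tau)/(\lambda(\tau)-1)$, one gets $\lambda(1-1/\tau)\approx -\tfrac1{16}e^{-i\pi\tau}$ as $\im\tau\to\infty$, that is,
\[
\log|\lambda(1-1/\tau)| = \pi\,\im\tau + O(1), \qquad \im\tau\ge 1.
\]
More generally, each rational $p/q$ with $p,q$ odd and $(p,q)=1$ is $\Gamma(2)$-equivalent to the cusp $1$. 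Picking $g=\begin{pmatrix}a&b\\ c&d\end{pmatrix}\in\Gamma(2)$ with $g(1)=p/q$ (so $c+d=\pm q$), and writing $\tau$ near $p/q$ in the local coordinate of that cusp, we get the estimate
\[
\log|\lambda(\tau)| \ge \frac{\pi\, \im\tau}{|q\tau-p|^2}\,\kappa - C
\]
on the horoball $\{\im\tau/|q\tau-p|^2 \ge 1\}$, with absolute constants $\kappa, C>0$. This is the standard horoball computation: $\im(h\tau)=\im\tau/|q\tau-p|^2$ up to the normalization by $2$ coming from the width of the cusp in $\Gamma(2)$.

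Next, fix small $y$ and intersect the horizontal line $\im\tau=y$ with the horoball at $p/q$. The horoball is the disc of radius $r_q\asymp 1/q^2$ tangent to $\bR$ at $p/q$. When $y< r_q$, that is $q\lesssim y^{-1/2}$, the line cuts a chord of length $\asymp\sqrt{y}/q$ around $p/q$. On the central half of the chord, $|x-p/q|\lesssim\sqrt y/q$ and $\im\tau/|q\tau-p|^2\asymp 1/(q^2y)$, so $\log^+|\lambda|\gtrsim 1/(q^2y)$ there, once $q^2y$ is below a small constant. Each such cusp therefore contributes at least
\[
\frac{\sqrt y}{q}\cdot\frac{1}{q^2 y}=\frac{1}{q^3\sqrt y}.
\]
That alone is too crude, so I will instead integrate the lower bound over the whole chord exactly. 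Integrating $\im\tau/|q\tau-p|^2 - 1$ over the part of the line inside the horoball gives $\asymp\int \bigl(y/(q^2(u^2+y^2))-1\bigr)^+\,du$. Substituting $u=ys$ turns this into $\asymp q^{-2}\log(1/(q^2y))$ up to constants for $q^2y$ small, because the integrand is $\asymp y/(q^2u^2)$ for $y\le u\le \sqrt y/q$. The horoballs for distinct cusps are disjoint (Ford-circle property for $\Gamma(2)$), so the contributions add. Thus
\[
\int_0^2\log^+|\lambda(x+iy)|\,dx \gtrsim \sum_{q\ \mathrm{odd},\ q\le \epsilon y^{-1/2}}\ \#\{p\in[0,2q):\ p \ \mathrm{odd},\ (p,q)=1\}\cdot \frac{1}{q^2}\log\frac{1}{q^2y}.
\]

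This is where a little number theory enters. The number of admissible $p$ is $\asymp\varphi(q)$ for odd $q$, and $\sum_{q\le Q,\,q\ \mathrm{odd}}\varphi(q)/q^2\gtrsim \log Q$ by the standard mean value of $\varphi(q)/q$. Taking only $q\le y^{-1/4}$, each logarithm $\log(1/(q^2y))$ is at least $\tfrac12\log(1/y)$, so the sum is $\gtrsim \log(1/y)\cdot\log(y^{-1/4})$, which is even more than needed. Even the single cusp $q=1$ (the points $x=1$, plus $x=-1\equiv 1$) already gives $\gtrsim\log(1/y)$, which is exactly the claimed bound.

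So the cleanest route is to use just the cusp at $1$. On $\tau=1+u+iy$ we have $\lambda(\tau)=\lambda(1-1/\sigma)$ with $\sigma=-1/(u+iy)$, and $\im\sigma=y/(u^2+y^2)$. For $y\le |u|\le \sqrt{y}/4$, $\im\sigma\ge 1$ and $\log|\lambda|\ge \pi\,\im\sigma - C\ge \tfrac{\pi y}{2u^2}$ for small $y$. Integrating over $u$ gives $\asymp 1$, not $\log(1/y)$ — so a single cusp is not enough after all, and the sum over Farey cusps with $q\lesssim y^{-1/2}$ is genuinely needed. The per-cusp contribution is of order $q^{-2}$ times $\varphi(q)$ admissible numerators, which produces $\sum_{q\le y^{-1/2}}\varphi(q)/q^2\asymp\log(1/y)$.

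The main obstacle is making the uniform cusp estimate rigorous: namely, a lower bound $\log|\lambda(\tau)|\ge c\,\im(h_{p/q}\tau)$ on the horoballs at all odd/odd cusps, with constants independent of $p/q$. This follows from $\Gamma(2)$-invariance applied to the single expansion at the cusp $1$, but requires careful bookkeeping of the matrices and of the cusp width.
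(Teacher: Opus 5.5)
Your final argument is correct: each cusp $p/q$ with $p,q$ odd and $q\lesssim y^{-1/2}$ contributes $\gtrsim q^{-2}$, there are exactly $\varphi(q)$ such cusps per period, and $\sum_{q\le Q,\ q\ {\rm odd}}\varphi(q)/q^2\asymp\log Q$. However, the middle of the proposal contains further false estimates that must be deleted, not only the claim about the cusp $q=1$.

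Put $u=x-p/q$ and $s(u)=y/(q^2(u^2+y^2))$. Then $s\asymp 1/(q^2y)$ only for $|u|\lesssim y$, not on the central half of the chord, so the contribution $1/(q^3\sqrt y)$ is wrong. Moreover $\int_\bR s(u)\,{\rm d}u=\pi/q^2$, so there is no factor $\log\frac1{q^2y}$ per cusp: $y/(q^2u^2)$ integrated over $[y,\sqrt y/q]$ is less than $1/q^2$, not a logarithm. Hence the display with $\frac1{q^2}\log\frac1{q^2y}$ and the bound $\log(1/y)\cdot\log(y^{-1/4})$ are false; the integral is in fact $O(\log(1/y))$. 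The logarithm comes from the number of cusps, not from any single one.

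The correct per-cusp step is as follows. Fix $T$ so large that $\log|\la|\ge\pi s-\log 64\ge\frac\pi2 s$ wherever $s\ge T$. If $q^2y\le 1/(4T)$, the set $\{s\ge T\}$ on the line $\im\tau=y$ is $|u|\le R:=(y/(Tq^2)-y^2)^{1/2}$, with $R\ge y$, and
\[
\int_{|u|\le R}\log^+|\la(p/q+u+iy)|\,{\rm d}u\ \ge\ \frac\pi2\int_{|u|\le R}s(u)\,{\rm d}u=\frac{\pi}{q^2}\arctan\frac Ry\ \ge\ \frac{\pi^2}{4q^2}.
\]
Summing over these disjoint Ford chords then gives the lemma. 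Also, what you call the main obstacle is immediate. With $h\tau=1-1/\tau$ and $g\in\Gamma(2)$, $g(1)=p/q$, one has $\la(gh\tau)=1-1/\la(\tau)$ and exactly $\im\tau=\im(gh\tau)/|q\,gh\tau-p|^2$. So the cusp estimate holds with the same constants at every cusp, with $\kappa=1$ and no extra normalisation.

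With this correction, your proof is a genuinely different route from the paper's. The paper passes to $\Lambda(e^{\pi i\tau})=\la(\tau)$ and uses Jensen's formula to bound the integral from below by the counting function of $a$-points. It then counts the ${\sf SL}_2(\bZ)$-orbit of $i$ with a pigeonhole over the six values $a,1/a,1-a,\dots$, ending with $\sum(\gamma^2+\delta^2)^{-1}$ over coprime pairs with $\gamma^2+\delta^2\le 1/(2y)$. It uses only the invariance properties of $\la$ and $\la(\tau)\to0$ as $\im\tau\to\infty$, and Jensen's formula does the integration.

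You instead exploit the pole-like behaviour of $\la$ at the cusps $\Gamma(2)$-equivalent to $1$. This needs the leading term $\la(\tau)\sim 16e^{i\pi\tau}$, the transformation rules, and the disjointness of Ford discs, but it avoids Jensen's formula and the pigeonhole. It also shows where $\log^+|\la|$ actually lives on the line $\im\tau=y$, namely in the Ford discs at the odd/odd cusps; with a little more work this also yields the matching upper bound. Both proofs end with essentially the same lattice-point count.
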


\smallskip\noindent{\em Proof of Lemma~\ref{lemmaC}}:
First, we move to the unit disk, letting $q=e^{\pi i\tau}$, $\tau = x + iy\in\bH$,  
and $\Lambda (q) = \lambda (\tau)$,
$\Lambda (0) = 0$. Then, letting $r=|q|=e^{-\pi y}$, we have\footnote
{Note that the function $\Lambda$ is not the classical universal covering
map $\bD \to \bC\setminus\{0, 1\}$ obtained from $\lambda$ via a conformal
equivalence $\bD \simeq \bH$. Rather, we use the $\lambda$-function in the variable
$q=e^{\pi i \tau}$, i.e.,  we regard $\lambda$ as a function on the quotient 
$\bH/[\tau\mapsto \tau+2] \simeq \bD\setminus\{0\}$, and then extend it to $q=0$. 
}
\[
\int_0^2 \log^+ |\lambda (x+iy)|\, {\rm d}x 
= \frac1\pi\, \int_{-\pi}^\pi \log^+|\Lambda(re^{i\theta})|\, 
{\rm d}\theta.
\]
Hence, for any $a\in \bC\setminus\{0, 1\}$,
\begin{align*}
\int_0^2 \log^+ |\lambda (x+iy)|\, {\rm d}x  &\ge
\frac1\pi\, \int_{-\pi}^\pi \log^+ |\Lambda(re^{i\theta}) - a|\, 
{\rm d}\theta - C(a) \\
&\ge 2\sum_{|q_k|\le r} \log\frac{r}{|q_k|} - C_1(a) 
\qquad \qquad ({\rm by\ the\ Jensen\ formula}),
\end{align*}
where $\{q_k\}$ denotes
the set of $a$-points of $\Lambda$ (counted with multiplicities), so, $|q_k|=e^{-\pi {\rm Im\,}\tau_k}$, where 
$\{\tau_k\}$ is the set of $a$-points $\lambda$ in $\{-1\le \re \tau <1\}$. Then
$\log (r/|q_k|) = \pi (\im \tau_k - y)$, and it will suffice to show that
\begin{equation}\label{eq:*}
\sum_{\im \tau_k \ge 2y} \im \tau_k  \gtrsim \log\frac1y, \qquad y\downarrow 0.
\end{equation}
It remains to make the necessary counting.

The group ${\sf SL}_2(\bZ)$ acts on the upper half-plane $\bH$,
\[
(M, w) \mapsto Mw = \frac{\alpha w + \beta}{\gamma w + \delta}, 
\quad M = \left( \begin{matrix}
\alpha & \beta \\
\gamma & \delta
\end{matrix} \right), \ w\in\bH.
\]
The modular function $\lambda$ is invariant under the subgroup $\Gamma (2)$ of ${\sf SL}_2(\bZ)$,
which consists of integer-valued matrices $M$ satisfying $\alpha \delta - \beta\gamma =1$, and
\[
\left( \begin{matrix}
\alpha & \beta \\
\gamma & \delta
\end{matrix} \right)
\equiv
\left(
\begin{matrix}
1 & 0 \\
0 & 1
\end{matrix}
\right)
\quad ({\rm mod\,} 2).
\]

Recall that $\Gamma (2)$ is a subgroup of index $6$ in 
${\sf SL}_2(\bZ)$, and that, for $\phi\in {\sf SL}_2(\bZ)$, $\lambda \circ \phi$ can take 
only one of the following six values
\[
\lambda,  \quad \frac1{\lambda}, \quad 1-\lambda, \quad  
\frac1{1-\lambda}, \quad \frac{\lambda}{1-\lambda}, \quad \frac{\lambda-1}\lambda.
\]
Fix $w=i$ and consider its orbit $G(w)=\{\phi w\colon \phi\in {\sf SL}_2(\bZ)\}$. For any $z\in G(w)$ we have that $\lambda(z)$ is equal to one of six numbers \[
a,  \quad \frac1a, \quad 1-a, \quad  
\frac1{1-a}, \quad \frac{a}{1-a}, \quad \frac{a-1}a,
\] 
where $a = \lambda(w)$. If we are able to show that
$$\sum_{\substack{z\in G(w):\\ |{\rm Re}\,z| < 1, {\rm Im}z > 2y}}{\rm Im}\,z \gtrsim \log \frac{1}{y}$$
then by the pigeonhole principle the same would hold for at least one of six values above, thus giving us the desired claim (note that, potentially, which exact value we need will depend on $y$ but as long as the total set of possible values is finite and fixed this is not a problem).

For a matrix $M = \left( \begin{matrix}
\alpha & \beta \\
\gamma & \delta
\end{matrix} \right)\in {\sf SL}_2(\bZ)$ and $w=i$, we have 
\begin{equation}\label{Mw}
Mw =  \frac{\alpha i + \beta}{\gamma i + \delta} 
= \frac{\beta \delta + \alpha \gamma}{\gamma^2 + \delta^2} + \frac{i}{\gamma^2 + \delta^2}\,.
\end{equation}

It might happen that $M_1w = M_2w$ for distinct matrices $M_1$ and $M_2$. However, in this case $M_2^{-1}M_1 w =w$ and this cannot happen too often. Specifically, $Mw = w$ has exactly four solutions
$$M = \left( \begin{matrix}
1 & 0 \\
0 & 1
\end{matrix} \right), \left( \begin{matrix}
-1 & 0 \\
0 & -1
\end{matrix} \right), \left( \begin{matrix}
0 & -1 \\
1 & 0
\end{matrix} \right), \left( \begin{matrix}
0 & 1 \\
-1 & 0
\end{matrix} \right),$$
which can be easily seen from \eqref{Mw}: we must have $\gamma^2+\delta^2 = 1$, thus $(\gamma, \delta) = (\pm 1, 0)$ or $(\gamma, \delta) = (0, \pm 1)$ and in all four cases we can uniquely determine $\alpha, \beta$ from $\alpha\delta - \beta\gamma = 1$ and $\alpha\gamma + \beta\delta = 0$.

Thus, if, instead of summing over the orbit $G(w)$, we sum over all matrices in $ {\sf SL}_2(\bZ)$ then we will count each point exactly four times, which does not affect the final conclusion of it being $\gtrsim \log \frac{1}{y}$. So, it remains to show that
$$\sum \frac{1}{\gamma^2+\delta^2} \gtrsim \log \frac{1}{y},$$
where the sum is taken over all integers $\alpha, \beta, \gamma, \delta$ such that
\[
\alpha\delta-\beta\gamma = 1, \quad |\beta\delta + \alpha\gamma| < \gamma^2+\delta^2, \quad 
\gamma^2+\delta^2\le \frac{1}{2y}.
\]

Let us fix coprime numbers $\gamma, \delta$. By Euclid’s algorithm, there exists a pair $\alpha, \beta$ such that $\alpha \delta - \beta \gamma =1$. All other pairs are obtained by adding or subtracting $\gamma$ to $\alpha$ and $\delta$ to $\beta$, in which
case $\beta\delta+\alpha\gamma$ will change by $\gamma^2+\delta^2$ (this corresponds to shifting $Mw$ by $1$ to the left or to the right). Thus, for each such pair $\gamma, \delta$, we can find at least one pair $(\alpha, \beta)$ such that $|\beta\delta + \alpha\gamma| < \gamma^2+\delta^2$. Hence, it suffices to show that 
$$\sum_{\substack { \gamma^2 + \delta^2 \le 1/(2y), \\ (\gamma, \delta)=1}}\,
\frac1{\gamma^2+\delta^2}\gtrsim \log \frac{1}{y}.$$

Finally, for a large number $X$, the number of coprime pairs 
$(\gamma, \delta)$ such that $X \le \gamma^2 + \delta^2 \le 2X$ is of order $X$ (because 
the number of pairs which are coprime has a positive density, see, for instance, \cite[Theorem~ 332]{HW}). 
Splitting the sum we are estimating into
dyadic blocks, we see that the sum is of order $\log (1/y)$, as required. \hfill $\Box$

\medskip 
Proving Theorem~1(b), we assume that the function $m$ is tame. Otherwise, we replace $m$ by its tame majorant 
$m^*$ with divergent logarithmic integral, which exists by Lemma~\ref{lemmaA}. 
Then $\bH(-m^*)\supseteq \bH(-m)$; so, if a meromorphic 
function $F$ omits  three distinct values in $\bH(-m^*)$, {\em a fortiori} it omits the same values in $\bH(-m)$.

As in the proof of Theorem~1(a), let $w\colon \bH \to \bH(-m)$ be the Riemann map such that
$w(iy)$ is purely imaginary and  $w(iy)\to \infty$ as $y\to\infty$. Set $W=w^{-1}$ and
$\Gamma = W (\bR)$, that is, $\im w(z) = 0$ for $z\in \Gamma$. Clearly, $\Gamma$ is a smooth curve, symmetric
with respect to the imaginary axis. Recall that, due to tameness of $m$, by Lemma~\ref{lemmaB},
the mapping $W$ is continuously differentiable up to the boundary.

\begin{claim}\label{lemmaD}
Let $W=U+iV$. Then, the function $x\mapsto U(x, 0)$ is strictly increasing on $\bR$, while the function
$x\mapsto V(x, 0)$ is non-increasing on $\bR_{\ge 0}$.
\end{claim}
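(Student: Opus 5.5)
The plan rests on two comparison arguments for the harmonic function $V=\im W$. Both use Lemma~\ref{lemmaB}: because $m$ is tame, $W'$ extends continuously to $\overline{\bH(-m)}$ and $|W'|\simeq 1$ there. In particular $V$ is continuous up to $\partial\bH(-m)$, vanishes on the boundary curve $\{y=-m(x)\}$, and grows at most linearly. We also use the symmetry $W(-\bar z)=-\overline{W(z)}$, which follows from the uniqueness of the normalized Riemann map, since $\bH(-m)$ is symmetric with respect to $i\bR$.

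\emph{Monotonicity of $U$.} Fix $t>0$ and put $h_t(z)=V(z+it)-V(z)$ on $\bH(-m)$. Since $\bH(-m)+it\subset\bH(-m)$, this is a harmonic function there. On $\partial\bH(-m)$ we have $V(z)=0$ and $V(z+it)\ge 0$, so $h_t\ge 0$ on the boundary. Moreover $|h_t|\le t\sup|W'|\lesssim t$, so $h_t$ is bounded. The Phragm\'en--Lindel\"of principle for bounded harmonic functions (after transplanting to $\bH$ by $w$) then gives $h_t\ge 0$ in $\bH(-m)$.

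Dividing by $t$ and letting $t\to0$ gives $V_y\ge 0$. By the Cauchy--Riemann equations $V_y=U_x=\re W'$. Thus $\re W'$ is a nonnegative harmonic function in $\bH(-m)$, continuous up to the boundary. It cannot vanish identically, since $W'\not\equiv$ const$\cdot i$ (otherwise $W$ would be an affine map and $\Gamma$ a line). By the minimum principle, $\re W'>0$ at every interior point, in particular on $\bR$. Hence $\frac{d}{dx}U(x,0)=\re W'(x)>0$, and $x\mapsto U(x,0)$ is strictly increasing.

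\emph{Monotonicity of $V$ on $\bR_{\ge0}$.} I will use reflection in vertical lines. Fix $a>0$ and let $z^*=2a-\bar z$ denote reflection in $\{\re z=a\}$. Set $D'_a=\bH(-m)\cap\{\re z>a\}$.

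First I check that $(D'_a)^*\subset\bH(-m)$. If $x>a$ and $y>-m(x)$, then $x^*=2a-x$ satisfies $|x^*|\le x$. Since $m$ is even and non-increasing on $\bR_{\ge0}$, this gives $m(x^*)\ge m(x)$, so $y>-m(x^*)$.

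Hence $g(z)=V(z^*)-V(z)$ is harmonic in $D'_a$. Its boundary values are as follows: $g=0$ on the segment $\{\re z=a\}$, and $g=V(z^*)\ge0$ on the curved part of the boundary, where $V(z)=0$. Its growth is controlled because $V(z)=c\,\im z+o(|z|)$: by Lemma~\ref{lemmaB}, $W'$ has a limit at $\infty$. Since $z$ and $z^*$ have the same imaginary part, this gives $g(z)=o(|z|)$.

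The domain $D'_a$ is contained in a half-plane translate $\{\re z>a\}$, so Phragm\'en--Lindel\"of in a half-plane tolerates $o(|z|)$ growth. Applying it to $-g$, we conclude $g\ge0$ in $D'_a$. Now take $0\le x_1<x_2$ and $a=(x_1+x_2)/2$; then $x_2\in D'_a$ and $x_2^*=x_1$, so $V(x_1,0)\ge V(x_2,0)$.

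The main obstacle I expect is the Phragm\'en--Lindel\"of step in the second argument. Its validity hinges on the precise $o(|z|)$ growth of $g$. I would get this from the existence of $\lim_{z\to\infty}W'(z)$ shown in the proof of Lemma~\ref{lemmaB} (applied to the inverse map), rather than merely from $|W'|\simeq1$, which only yields $O(|z|)$. If needed, one can instead bound $|g(z)|\le |z-z^*|\sup|W'|$, and $|z-z^*|=2|\re z-a|$ is still linear. That bound is not enough on its own, so the limit of $W'$ is genuinely needed.
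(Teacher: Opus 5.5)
\textbf{Monotonicity of $U$.} This part is the paper's argument: the comparison $h_t(z)=V(z+it)-V(z)$, the bounded Phragm\'en--Lindel\"of principle, $V_y\ge 0$, and then the minimum principle. Your stated reason that $\re W'\not\equiv 0$ is slightly off. $\partial\bH(-m)$ being a line is no contradiction when $m$ is constant; it would have to be a \emph{vertical} line. A simpler reason: $V_y\equiv 0$ is impossible, since $V$ goes from $0$ at $x-im(x)$ to a positive value at $x$.

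\textbf{Monotonicity of $V$: comparison with the paper.} Here you take a genuinely different route. The paper differentiates first. $V_x$ is a \emph{bounded} harmonic function in $\bH(-m)\cap\{\re z>0\}$, and it vanishes on $i\bR$ by the symmetry $V(-x,y)=V(x,y)$. It is $\le 0$ on the curve $y=-m(x)$, $x\ge 0$, because $\nabla V$ is a nonnegative multiple of the inward normal $(m'(x),1)$. Bounded Phragm\'en--Lindel\"of then gives $V_x\le 0$. This is very short, but it relies on three things: continuity of $W'$ up to the boundary (Kellogg, via Lemma~\ref{lemmaB}), smoothness of the boundary (to speak of normals), and the symmetry of $W$.

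Your reflection (moving-plane) comparison of $V$ with $V(2a-\bar z)$ on $D'_a$ needs less. It uses only that $V\ge 0$ is continuous on the closure and vanishes on the boundary, plus the purely geometric inclusion $(D'_a)^*\subset\bH(-m)$, which is exactly where evenness and monotonicity of $m$ enter. Neither boundary derivatives nor the symmetry of $W$ are used, so your argument is more robust. The price is that $g$ is unbounded, so you need a growth version of Phragm\'en--Lindel\"of.

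\textbf{Your closing remark is mistaken.} Correcting it simplifies your proof. The horizontal segment from $z$ to $z^*$ lies in $\bH(-m)$: its points $t+iy$ satisfy $|t|\le\re z$, hence $m(t)\ge m(\re z)$. Therefore $|g(z)|\le 2(\re z-a)\sup|W'|=O(|z|)$.

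Moreover, $D'_a$ lies in the quarter-plane $Q=\{x>a,\ y>-m(a)\}$. Extending $\max(-g,0)$ by zero to $Q\setminus D'_a$ gives a subharmonic function in $Q$ that vanishes on $\partial Q$. In an angle of opening $\pi/2$, Phragm\'en--Lindel\"of tolerates $o(|z|^2)$ growth (move the vertex to $0$ and square). So $O(|z|)$ is plenty.

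Thus you do not need the limit of $W'$ at infinity, which is established only in the proof of Lemma~\ref{lemmaB}, not in its statement. You also avoid having to check that this limit is real. Your formula $V(z)=c\,\im z+o(|z|)$ tacitly requires that, since you need $\im(Lz^*)=\im(Lz)$. It does hold, because $W'$ is real on $i\bR$ by the normalization, but you did not say so.
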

\begin{proof}
To prove the first statement, consider the function $h_s(z) = V(z+is)-V(z)$, which is
harmonic in $\bH(-m)$. On $\partial \bH(-m)$, $h_s >0$. Moreover, $h_s$ is bounded because,
by Lemma~\ref{lemmaB}, $W'$ is bounded. Thus, by the Phragm\'en--Lindel\"of principle, 
$h_s\ge 0$ in $\bH(-m)$, that is, $V(z+is) \ge V(z)$ therein. Letting $s\to 0$, we get 
$V_y \ge 0$. Since $V_y$ is harmonic and non-constant, it cannot have a minimum inside $\bH(-m)$,
i.e., $V_y>0$ in $\bH(-m)$. By Cauchy--Riemann, $U_x = V_y>0$, proving the first statement.

The proof of the second part is quite similar.
Consider the partial derivative $V_x(x, y)$. It is a bounded harmonic function 
in $\bH (-m)$ (boundedness follows from Lemma~\ref{lemmaB}).
By the symmetry of the map $W$, we have $V(-x, y)=V(x, y)$, and therefore the function $V_x$
vanishes on the imaginary axis. The function $V$ is positive in $\bH(-m)$ and vanishes 
on $\partial \bH(-m)$. Then its gradient $\nabla V$ points into $\bH(-m)$. Since $m$ is non-increasing,
this implies that $V_x (x, - m(x)) \le 0$ for $x\ge 0$ (and similarly, $\ge 0$ for $x\le 0$).  Thus, by the 
Phragm\'en--Lindel\"of principle, $V_x (x, y) \le  0$ in $\bH(-m) \bigcap \bigl\{ {\rm Re\,}z>0 \bigr\}$. 
\end{proof}

By Claim~\ref{lemmaD}, the curve $\Gamma$ is a graph of an even smooth function $n\colon \bR\to\bR_{>0}$, 
non-increasing on  $\bR_{\ge 0}$: $\Gamma = \{x+i n(x)\colon x\in \bR\}$.

\begin{claim}\label{claim:n}
For $x\in \bR$, 
\begin{equation}\label{eq:n} 
n(x) \le Cm(x/C)
\end{equation}
with $C= \| W' \|_\infty =\sup\{| W'(\zeta)|\colon \zeta\in\overline\bH(-m)\}$.
\end{claim}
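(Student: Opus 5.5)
The plan is to compare the point of $\Gamma$ above $x$ with the boundary point of $\bH(-m)$ directly below the corresponding real preimage, using only that $W$ is $C$-Lipschitz on $\overline\bH(-m)$. Lemma~\ref{lemmaB} gives that $C=\|W'\|_\infty<\infty$.

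\textbf{Step 1 (parametrize the point of $\Gamma$).} Fix $x\in\bR$. By Claim~\ref{lemmaD}, $t\mapsto U(t,0)$ is a strictly increasing bijection of $\bR$ onto the set of abscissas of $\Gamma$. So there is a unique $t\in\bR$ with
\[
W(t)=U(t,0)+iV(t,0)=x+i\,n(x).
\]
In particular $n(x)=V(t,0)$.

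\textbf{Step 2 (bound $n(x)$ by $C\,m(t)$).} The vertical segment from $t$ down to $t-i\,m(t)$ lies in $\overline{\bH(-m)}$. Its endpoint $t-im(t)$ is on $\partial\bH(-m)$, so $W(t-im(t))\in\bR$ and $V(t,-m(t))=0$. Integrating $W'$ along this segment gives
\[
n(x)=V(t,0)-V(t,-m(t))\le |W(t)-W(t-im(t))|\le C\,m(t).
\]

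\textbf{Step 3 (compare $t$ with $x$).} By the symmetry of the normalized map, $W$ maps the imaginary axis to the imaginary axis: $w(iy)$ is purely imaginary, and the map is symmetric under $z\mapsto-\bar z$. Hence $U(0,0)=0$. Integrating $W'$ along the real segment $[0,t]$, which lies in $\bH(-m)$, gives
\[
|x|=|U(t,0)-U(0,0)|\le |W(t)-W(0)|\le C|t|,
\]
so $|t|\ge |x|/C$.

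Since $m$ is even and non-increasing on $\bR_{\ge0}$, we get $m(t)=m(|t|)\le m(x/C)$. Combined with Step 2, this yields \eqref{eq:n} with exactly $C=\|W'\|_\infty$.

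The only delicate point is justifying the integrations of $W'$ up to the boundary point $t-im(t)$. This is covered by Lemma~\ref{lemmaB}, which gives continuity of $W'$ on the closed domain. We also need the vertical segment to lie in the closure, which holds because $\bH(-m)$ is defined by $y>-m(x)$.
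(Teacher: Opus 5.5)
Your proof is correct and follows the paper's argument essentially verbatim. You bound $V(t,0)=n(x)$ by $C\,m(t)$ using the Lipschitz estimate along the vertical segment down to $t-im(t)$, bound $|x|=|U(t,0)|$ by $C|t|$ using the segment from $0$, and conclude by evenness and monotonicity of $m$. The only cosmetic difference is that you work with $|x|$ and $|t|$ throughout, whereas the paper first reduces to $x\ge 0$ by evenness.
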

\begin{proof}
Since both functions $m$ and $n$ are even, it suffices to check \eqref{eq:n} 
only for $x\ge 0$. Let, as above, $W=U+iV$. Then $V(t)=n(U(t))$, $t\in\bR$. Since
\[
V(t)={\rm Im\,} W(t) = {\rm Im\,}[ W(t)-W(t-im(t) ],
\]
we have
\[
V(t) \le | W(t) - W(t-im(t))| \le C m(t).
\]
Similarly, 
\[
U(t) = {\rm Re\,} W(t) = {\rm Re\,}[W(t)-W(0)],
\] 
whence, for $t\ge 0$, $U(t)\le |W(t)-W(0)|\le Ct$, 
and therefore, $m(t) \le m(U(t)/C)$. Combining both estimates, we obtain
\[
n(U(t)) = V(t) \le C m(t) \le Cm(U(t)/C).
\] 
Letting $x=U(t)$, we get the claim.
\end{proof}

Now we are ready to prove Theorem~1(b). Assume that 
\[ \int_1^\infty \frac{\log^- m(t)}{t^2}\, {\rm d}t = +\infty, \] 
that $m$ is tame, and let $F=\lambda \circ W$. The function $F$ is analytic in $\bH(-m)$ and does not take the values $0$ and $1$
there. We aim to show that $F$ is of unbounded type in $\bH$. Suppose that 
it is not true, and $F$ is of bounded type in $\bH$. Then, 
by the classical Nevanlinna representation~\cite[Chapter~VI, Theorem~3.1]{GO},
\begin{equation}\label{eq:logintegral}
\int_{\bR} \frac{\log^+|F(t)|}{t^2+1}\, {\rm d}t <\infty,
\end{equation}
and 
\[
\log^+|F(\zeta)| 
\le  \frac{\im \zeta}\pi \int_\bR \frac{\log^+ |F(t)|}{|t-\zeta|^2} \, {\rm d}t + c\, {\rm Im\,}\zeta,
\]
with
\[
c = \lim_{y\to +\infty} \frac1y\, \log^+ |F(iy)|.
\]
Moreover, since, for a given $x$, $\lambda (x+iy)$ tends to $0$ as $y\to\infty$,
$F(iy) = (\la\circ W)(iy)\to 0$, and $c=0$. So, 
$\log^+ |F|$ is majorized by its Poisson integral  
\begin{align}\label{eq:star}
\nonumber \log^+|F(\zeta)| 
&\le  \frac{\im \zeta}\pi \int_\bR \frac{\log^+ |F(t)|}{|t-\zeta|^2} \, {\rm d}t \\
&=  \int_\bR \log^+ |F(t)|\, \omega_\bH ({\rm d}t, \zeta) < \infty, \qquad \zeta\in\bH.
\end{align}
Here and elsewhere, $\omega_{\bG}({\rm d}z, \zeta)$ denotes the harmonic measure of the domain
$\bG$ evaluated at $\zeta\in\bG$.
Letting $\zeta=w(z)$ in~\eqref{eq:star}, we get
\begin{equation}\label{eq:starstar}
\log^+ |\lambda (z)| \le
\int_\Gamma \log^+ |\lambda (z')|\, \omega_{\bH (n)}({\rm d}z', z) < \infty, 
\qquad z\in \bH(n),
\end{equation}
where, as above,  $\Gamma =\{x+i n(x)\colon x\in \bR\}=\partial \bH(n)$.
By~Claim~\ref{claim:n},
\[ \int_1^\infty \frac{\log^- n(t)}{t^2} \, {\rm d}t = +\infty. \] 

Let $n^*$ be a tame majorant of $n$ with divergent logarithmic integral which exists by Lemma~\ref{lemmaA}.
Then $\bH(n^*) \subset \bH(n)$. Let $\Gamma^* = \partial \bH(n^*) = \{x+in^*(x)\colon x\in\bR \}$,
and let $w^*\colon \bH \to \bH(n^*)$ be the Riemann map with our usual convention ($w^*$ is purely imaginary,
and $w^*(i\eta)\to\infty$ as $\eta\to\infty$).
\begin{claim}\label{claim:PL}
For $z\in \bH(n^*)$, 
\begin{equation}\label{eq:3stars}
\int_{\Gamma^*} \log^+ |\lambda (z')|\, \omega_{\bH (n^*)}({\rm d}z', z)
\le \int_\Gamma \log^+ |\lambda (z')|\, \omega_{\bH (n)}({\rm d}z', z) < \infty.
\end{equation}
\end{claim}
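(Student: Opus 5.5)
Write $u(z)$ for the right-hand side of~\eqref{eq:3stars}, that is,
\[
u(z) = \int_\Gamma \log^+ |\lambda (z')|\, \omega_{\bH (n)}({\rm d}z', z), \qquad z\in\bH(n).
\]
By~\eqref{eq:starstar}, $u$ is a finite positive harmonic function in $\bH(n)$. Its boundary values on $\Gamma$ are $\log^+|\lambda|$, in the sense that $u$ is the harmonic-measure (Perron) solution with these data; this holds because $\log^+|\lambda|$ is continuous on $\Gamma$ and $u$ is the pullback under $w$ of a Poisson integral. Again by~\eqref{eq:starstar}, $u \ge \log^+|\lambda|$ throughout $\bH(n)$, and in particular on $\Gamma^*\subset \overline{\bH(n)}$, since $n^*\ge n$. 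The plan is to compare $u$ with the left-hand side
\[
u^*(z) = \int_{\Gamma^*} \log^+ |\lambda (z')|\, \omega_{\bH (n^*)}({\rm d}z', z),
\]
which is the smallest nonnegative harmonic majorant-type solution in $\bH(n^*)$ with data $\log^+|\lambda|$ on $\Gamma^*$. This is the usual monotonicity of harmonic measure (subordination) argument.

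The concrete step is as follows. For $R>0$, truncate the data: let $f_R = \min(\log^+|\lambda|, R)\cdot \chi_R$, where $\chi_R$ is a continuous cutoff equal to $1$ on $\{|\re z'|\le R\}$ and $0$ for $|\re z'|\ge R+1$. Let $u_R^*$ be the bounded harmonic function in $\bH(n^*)$ given by the harmonic measure integral of $f_R$ on $\Gamma^*$. Consider $h = u_R^* - u$ on $\bH(n^*)$. It is harmonic there and bounded above by $R$, since $u\ge 0$. On $\Gamma^*$ we have $h\le f_R - \log^+|\lambda| \le 0$, using $u\ge\log^+|\lambda|$ there; at boundary points where $u$ is only lower semicontinuous, use $\liminf u \ge \log^+|\lambda|$, which follows since $u\ge \log^+|\lambda|$ pointwise and $\lambda$ is continuous. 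The domain $\bH(n^*)$ is conformally a half-plane, and $h$ is bounded above. By the Phragm\'en--Lindel\"of principle (equivalently, the extended maximum principle in a domain whose boundary has no polar exceptional set, with $\infty$ a single boundary point of harmonic measure zero), $h\le 0$ in $\bH(n^*)$. Thus $u_R^*\le u$ in $\bH(n^*)$.

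Finally, let $R\to\infty$. We have $f_R \uparrow \log^+|\lambda|$ on $\Gamma^*$, so by monotone convergence $u_R^*(z)\uparrow u^*(z)$. This gives $u^*(z)\le u(z)<\infty$, which is~\eqref{eq:3stars}.

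The main point needing care is the boundary comparison on $\Gamma^*$. Where $\Gamma^*$ touches $\Gamma$ (points with $n^*(x)=n(x)$), we need $\liminf_{z\to z_0} u(z)\ge \log^+|\lambda(z_0)|$. This is immediate from the pointwise inequality~\eqref{eq:starstar} and the continuity of $\lambda$ at $z_0\in\bH$. The other delicate point is that the point at infinity causes no trouble; this is handled by the boundedness from above of $h$, which is why we truncate before comparing.
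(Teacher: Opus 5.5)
Your argument is correct, but it reaches the inequality by a different route than the paper. The paper pulls both functions back to $\bH$ by $w^*$ and applies the Herglotz--F.~Riesz representation to the positive harmonic function $v=h\circ w^*$, namely $v(\zeta)=c\eta+\frac{\eta}{\pi}\int_\bR \frac{v(t)}{|t-\zeta|^2}\,{\rm d}t$ with $c\ge 0$. Since $\log^+|\lambda\circ w^*|\le v$ on $\bR$, the Poisson integral of the former is at most that of the latter, hence at most $v$, and that is the whole proof. You instead truncate the data on $\Gamma^*$. You then compare the bounded harmonic function $u_R^*$ with the majorant $u$ by the maximum principle for harmonic functions bounded above, where the only exceptional boundary point is $\infty$. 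Finally you let $R\to\infty$.

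The paper's version is a one-liner once the representation theorem is available. It does rely on $h$ extending continuously to the points where $\Gamma^*$ touches $\Gamma$, so that $v$ is continuous on $\overline\bH$ and its boundary measure is $v(t)\,{\rm d}t$. Your version avoids the representation theorem. It needs only the pointwise inequality \eqref{eq:starstar} plus continuity of $\lambda$ to get $\liminf u\ge\log^+|\lambda|$ on $\Gamma^*$. The truncation is exactly what makes $u_R^*-u$ bounded above without assuming in advance that the left-hand side of \eqref{eq:3stars} is finite.

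One small detail: to have $f_R\uparrow\log^+|\lambda|$, take the cutoffs increasing in $R$, e.g. $\chi_R(z')=\min\{1,(R+1-|\re z'|)^+\}$. Alternatively, Fatou's lemma gives $u^*\le\liminf_{R} u_R^*\le u$ directly.
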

\begin{proof} Denote by $h(z)$ the right hand side of~\eqref{eq:3stars}. 
This is a non-negative harmonic function in $\bH(n)$
that majorizes $\log^+|\lambda|$ in $\overline\bH(n)$. Returning to the upper half-plane, we let 
$u=\log^+|\lambda \circ w^*|$ and $v=h\circ w^*$. The non-negative functions $u$ and $v$ are correspondingly 
subharmonic and harmonic in $\bH$, continuous in $\overline\bH$, and $u\le v$ in $\overline\bH$. 
Furthermore, by the Herglotz -- F. Riesz representation theorem applied to the positive harmonic function $v$, we have
\[
v(\zeta) = c\eta + \frac{\eta}{\pi}\, \int_{\bR} \frac{v(t)}{|t-\zeta|^2}\, {\rm d}t,
\qquad \zeta = \xi + i\eta,
\]
with $c\ge 0$. Therefore,
\[
\frac{\eta}{\pi}\, \int_{\bR} \frac{u(t)}{|t-\zeta|^2}\, {\rm d}t
\le \frac{\eta}{\pi}\, \int_{\bR} \frac{v(t)}{|t-\zeta|^2}\, {\rm d}t \le v(\zeta)\,,
\qquad \zeta\in\bH,
\]
which is nothing but~\eqref{eq:3stars} with $z=w^*(\zeta)$.
\end{proof}

To arrive at a contradiction,  it remains to show that the integral on the left hand side 
of~\eqref{eq:3stars} is infinite. 
Fix $z\in \bH(n^*)$. The function $n^*$ takes the constant value 
$n_k=n^*(2^k)$
on the interval $I_k=[2^k - 2^{k-3}, 2^k + 2^{k-2}]$, $k\ge 3$. 
Hence, the curve $\Gamma^*=\{t+in^*(t)\}$ 
has long horizontal intervals $\Gamma_k$ over $I_k$ of height $n_k$. 
Next, we claim that on the middle third of $\Gamma_k$ the harmonic measure  
$\omega_{\bH (n^*)}({\rm d}z', z)$ is uniformly in $k$ comparable with the measure ${\rm d t}/t^2$.
\begin{claim}\label{claim:harm_measure}
Let $J_k$ be the middle third of $I_k$. Then
there exists $k_0=k_0(z)$ and constants
$0<c(z)< C(z)<\infty$ {\rm (}depending on $z$ but not on $k${\rm )} such that, for every $k\ge k_0$ and every Borel set
$A\subset J_k$, 
\[
c(z) \int_A \frac{{\rm d}t}{t^2} \le \omega_{\bH (n^*)}(A+in_k, z) 
\le C(z) \int_A \frac{{\rm d}t}{t^2}
\]
\end{claim}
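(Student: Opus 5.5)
The plan is to transport the question to the upper half-plane by the Riemann map $w^*\colon\bH\to\bH(n^*)$ and use Lemma~\ref{lemmaB} for the tame function $n^*$; note that Lemma~\ref{lemmaB} covers the domain $\bH(+m)$ as well. By conformal invariance of harmonic measure,
\[
\omega_{\bH(n^*)}(A+in_k, z) = \omega_\bH\bigl((w^*)^{-1}(A+in_k), \zeta\bigr), \qquad \zeta=(w^*)^{-1}(z),
\]
and the right-hand side is the Poisson integral
\[
\frac{\eta}{\pi}\int_{(w^*)^{-1}(A+in_k)}\frac{{\rm d}s}{|s-\zeta|^2}, \qquad \zeta=\xi+i\eta .
\]

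Write $W^*=(w^*)^{-1}$ on $\Gamma^*$, and let $s=\sigma(t)$ be the point of $\bR$ with $w^*(s)=t+in_k$, for $t\in J_k$. Lemma~\ref{lemmaB} gives $|(w^*)'|\simeq 1$ on $\overline\bH$, with $w^*$ a $C^1$ map up to the boundary. Along the horizontal segment $\Gamma_k$ we have $|{\rm d}(t+in_k)| = {\rm d}t$, hence $|\sigma'(t)| = 1/|(w^*)'(\sigma(t))| \simeq 1$ uniformly in $k$. Changing variables then gives
\[
\omega_{\bH(n^*)}(A+in_k,z) \simeq \eta\int_A \frac{{\rm d}t}{|\sigma(t)-\zeta|^2},
\]
with absolute implicit constants.

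It remains to show that $|\sigma(t)-\zeta|\simeq t$ for $t\in J_k$ and $k\ge k_0(z)$. By symmetry and the normalization, $w^*(0)$ is the point $in^*(0)$, so $\sigma(0)=0$ after identifying boundary points. Since $\Gamma^*$ is a graph of the bounded function $n^*$, the arc length from $in^*(0)$ to $t+in_k$ is between $t$ and $t+n^*(0)$. The bounds $|(w^*)'|\simeq 1$ then give $|\sigma(t)|\simeq t$ up to an additive constant: the upper bound follows from integrating $|W^{*\prime}|$ along $\Gamma^*$, and the lower bound from $|t+in_k - in^*(0)| \le \|(w^*)'\|_\infty |\sigma(t)|$. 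Since $t\simeq 2^k\to\infty$ on $J_k$, for $k$ large (depending on $|\zeta|$) we get $|\sigma(t)-\zeta|\simeq |\sigma(t)|\simeq t$. Substituting, we obtain
\[
c(z)\int_A \frac{{\rm d}t}{t^2} \le \omega_{\bH(n^*)}(A+in_k,z) \le C(z)\int_A \frac{{\rm d}t}{t^2},
\]
with $c(z), C(z)$ proportional to $\eta={\rm Im}\,(w^*)^{-1}(z)$.

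The only delicate point is the uniformity in $k$, and it is exactly what the global two-sided bound $|(w^*)'|\simeq 1$ from Lemma~\ref{lemmaB} provides. The restriction to the middle third $J_k$ is not even needed for this argument. I would just double-check that Lemma~\ref{lemmaB} indeed applies to $\bH(+n^*)$ with the stated normalization.
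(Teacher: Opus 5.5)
Your proof is correct and follows the paper's route. You transport the problem to $\bH$ via $w^*$, use Lemma~\ref{lemmaB} (which does cover $\bH(+n^*)$ for tame $n^*$) to get $|\sigma'|\simeq 1$ and $|\sigma(t)|\simeq t$ uniformly in $k$, and then compare the Poisson kernel with $1/t^2$ for large $k$; your change of variables and arc-length estimate just spell out the paper's brief assertions $|(w^*)^{-1}(E)|\simeq|A|$ and $|u^*(t)|\simeq|t|$ on $T_k$.
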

\begin{proof}Indeed, let, as above, $w^*=u^*+iv^*\colon \bH \to \bH(n^*)$ be the Riemann map with our usual normalization:
$u^*(i\eta) = 0$ and $v^*(i\eta)\to \infty$ as $\eta\to +\infty$. 
The function $w^*$ maps an interval $T_k\subset \bR$ onto the boundary segment $S_k=\{x+in_k\colon x\in J_k\}$.
Then $u^*(T_k)=J_k$, and, by Lemma~\ref{lemmaB}, $|T_k| \simeq |J_k|$ and $|u^*(t)| \simeq |t|$ for $t\in T_k$. 
By the conformal invariance of the harmonic measure,
\[ \omega_{\bH (n^*)}(E, z) = \omega_{\bH} ((w^*)^{-1}(E), \zeta), \qquad z=w^*(\zeta).\] 
Hence, for $A\subset J_k$ and $E=\{t+in_k\colon t\in A\}$,
\[
\omega_{\bH (n^*)}(E, z) = \frac1\pi\,
\int_{(w^*)^{-1}(E)} \frac{\eta}{(t-\xi)^2 + \eta^2}\, {\rm d}t, \quad \xi+i\eta = \zeta.
\] 
Furthermore, $|(w^*)^{-1}(E)| \simeq |A|$, $|t|\simeq 2^k$, and for $k$ large enough,
$ \eta/((t-\xi)^2 + \eta^2) \simeq 1/t^2$ (with implicit constants depending on $\zeta$, that is, on $z$),
proving the claim.
\end{proof}

\smallskip Finally, we return to the integral on the left hand side of~\eqref{eq:3stars}. By Claim~\ref{claim:harm_measure}, 
it is 
\[
\gtrsim  \sum_{k\ge k_0} \int_{J_k} \log^+ |\lambda (t+in_k)|\,
\frac{{\rm d}t}{t^2}
\]
The $k$-th term of this sum is
\[
\gtrsim 2^{-2k}\, \int_{J_k} \log^+ |\lambda (t+in_k)|\, {\rm d}t,
\]
which, by Lemma~\ref{lemmaC} and $2$-periodicity of $\lambda$, is 
\[
\gtrsim 2^{-2k} \cdot 2^k \log\frac1{n_k} =
 2^{-k} \log \frac1{n^*(2^k)}.
\]
Since the function $n^*$ is non-increasing, divergence of the series
\[
\sum_k  2^{-k} \log \frac1{n^*(2^k)} = \infty
\]
follows from the divergence of the integral
\[
\int_1^\infty \frac{\log^- n^*(t)}{t^2}\, {\rm d}t  = \infty.
\]
This proves the divergence of the integral on the left hand side of~\eqref{eq:3stars}, and completes the
proof of the theorem. \mbox{}\hfill $\Box$

\section{Example of Yixin He and Teng Zhang: a brief discussion}
\label{sect:brief}

We briefly sketch their construction. The function $F$ has the form
$\lambda\circ\phi$, where $\lambda$
is the modular function defined in Section~\ref{subsect:Thm1b}, and $\phi$ is a conformal map
$\phi\colon \bH\to\Omega\subset\bH$. To construct $\Omega$, consider the set
\[
S =\{ z\in \bH\colon  \rm{Im\,}\lambda(z)=0\}.
\]
This set consists of disjoint open semi-circles and vertical lines $\bH$; 
some of  their closures are tangent to each other at rational points of $\bR$. The boundary 
$\gamma=\partial\Omega$ is a curve
consisting of some of these open semi-circles and points on the real line which do not accumulate. 
It is not difficult to see that the function $\la\circ\phi$, as a map to the Riemann sphere, 
has continuous real boundary values on $\bR$ and extends to a meromorphic function $F$ 
by Schwarz reflection. All $0$-, $1$-, and $\infty$-points of the function $F$ are real.

Moreover, we can make the region $\Omega$ asymptotically very close to $\bH$. 
Specifically,
one can choose $\Omega$ so that $\partial \Omega$ lies in an arbitrarily
narrow corridor of the form \[ \{ x+iy\colon 0\le y<m(x), x\in\bR\}, \]
where $m(x)$ is
a positive continuous even function, non-increasing on $\bR_{\ge 0}$, with
divergent logarithmic integral.
By the argument in the proof of Theorem~\ref{main_thm}(b),  this choice 
implies that $F$ is not of bounded type in $\bH$.

Lastly, let us mention that the  version of Question~\ref{quest_Nevanlinna}
for entire functions remains open.

\bigskip
\medskip

\noindent Alexandre Eremenko,
Mathematics Department, Purdue University, West Lafayette, IN 47907 USA, 
{\tt eremenko@purdue.edu}

\medskip\noindent Aleksei Kulikov,
University of Copenhagen, Department of Mathematical Sciences, Universitetsparken 5, 2100 Copenhagen, Denmark,
{\tt  ak@math.ku.dk}, {\tt lyosha.kulikov@mail.ru}

\medskip\noindent Mikhail Sodin,
School of Mathematics, Tel Aviv University, Tel Aviv 69978, Israel,
\newline {\tt sodin@tauex.tau.ac.il}

\end{document}